\DeclareMathOperator{\var}{var}
\numberwithin{equation}{section}
\theoremstyle{plain}
\newtheorem{theorem}{Theorem}[section]
\newtheorem{lemma}[theorem]{Lemma}
\newtheorem{prop}[theorem]{Proposition}
\newtheorem{remark}[theorem]{Remark}
\newtheorem{maintheorem}{Theorem}
\theoremstyle{definition}
\newtheorem{defn}[theorem]{Definition}
\newcommand{\N}{\mathbbm{N}}
\newcommand{\Z}{\mathbbm{Z}}
\newcommand{\R}{\mathbbm{R}}
\newcommand{\een}{\mathbbm{1}}
\newcommand{\tyk}{\mathcal}
\def\Eah{\tyk{E}_{\textup{ah}}}
\def\E{\tyk{H}_{\textup{ea}}}
\def\Hio{\tyk{H}_{\textup{io}}}
\def\Rio{\tyk{R}_{\textup{io}}}
\def\newRio{\widehat{\mathcal{R}}_\mathrm{io}}
\newcommand{\dd}{\mathrm{d}}
\newcommand{\para}[1]{\left( #1 \right )}
\newcommand{\tub}[1]{\left\{#1\right\}}
\newcommand{\num}[1]{\left | #1\right |}
\newcommand{\norm}[1]{\left\Vert #1\right\Vert}
\newcommand{\ip}[2]{\left < #1,#2 \right>}
\newcommand{\ear}{\tyk{R}_{\textup{ea}}}
\newcommand{\abs}[1]{\left\lvert#1\right\rvert}
\DeclareMathOperator{\udimb}{\overline{dim_B}}
\renewcommand{\epsilon}{\varepsilon}
\begin{document}
\title[On shrinking targets \& self-returning points]{On shrinking targets and self-returning points}

\author{Maxim Kirsebom}
\address{Maxim Kirsebom, University of Hamburg, Department of Mathematics, Bundesstrasse 55, 20146 Hamburg, Germany}
\email{maxim.kirsebom@uni-hamburg.de}

\author{Philipp Kunde}
\address{Philipp Kunde, Pennsylvania State University, Department of Mathematics, McAllister Building, State College, PA 16802, USA}
\email{pkunde.math@gmail.com}

\author{Tomas Persson}
\address{Tomas Persson, Centre for Mathematical Sciences, Lund University, Box 118, 221 00 Lund, Sweden}
\email{tomasp@maths.lth.se}

\thanks{We thank Victor Ufnarovski for proving
  Lemma~\ref{lem:ufnarovski} for us. We also thank both Simon
  Baker for bringing the references \cite{Baker}, \cite{Chang} to
  our attention and Dmitry Kleinbock for asking about an
  eventually-always version of Boshernitzan's Theorem
  \ref{thm:Bos} which led to our Theorem \ref{theo:EARgen}. We
  acknowledge financial support by the Hamburg--Lund Funding
  Program 2018 which made several mutual research visits
  possible. P.~K.\ acknowledges financial support from a DFG
  Forschungsstipendium under Grant No.\ 405305501.}

\subjclass[2010]{37E05, 37A05, 37B20}

\begin{abstract}
  We consider the set $\Rio$ of points returning infinitely many
  times to a sequence of shrinking targets around
  themselves. Under additional assumptions we improve
  Boshernitzan's pioneering result on the speed of recurrence. In
  the case of the doubling map as well as some linear maps on the
  $d$-dimensional torus, we even obtain a dichotomy condition for
  $\Rio$ to have measure zero or one. Moreover, we study the set
  of points eventually always returning and prove an analogue of
  Boshernitzan's result in similar generality.
\end{abstract}

\maketitle

\section{Introduction}

Let $(X,\mathcal{B},\mu,T)$ be a measure preserving system
equipped with a compatible metric $d$, i.\,e.\ a metric such that
open subsets of $X$ are measurable. We consider a sequence
$\tub{B(y,r_n)}_{n=1}^{\infty}$ of balls in $X$ with center $y$
and radius $r_n$. We will refer to the balls as shrinking targets
since the interesting questions arise when $r_n \to 0$ although
this is not a formal requirement.
Classical shrinking target questions focus on the set of $x\in
X$, whose $n$'th iterate under $T$ hits $B(y,r_n)$ for infinitely
many $n$. That is, the set
\begin{equation*}
  \Hio = \Hio(y,r_n) := \tub{\, x \in X : T^{n}(x)\in B(y,r_n)
    \text{ for $\infty$ many } n \in \N \,}.
\end{equation*}
For many dynamical systems, the measure as well as dimension of
this set is well understood under certain assumptions on the
measure of the shrinking targets (see \cite{Athreya},
\cite{ChernovKleinbock}, and references therein for examples). A
different and interesting question arises when we do not consider
one fixed center for the shrinking targets, but instead consider
the points that return infinitely many times to a sequence of
shrinking targets around \emph{themselves}.  That is, the set
\begin{equation*}
  \Rio = \Rio(r_n) := \tub{\, x \in X: T^n(x)\in B(x,r_n) \text{ for
      $\infty$ many } n \in \N \,}.
\end{equation*}
If the invariant measure $\mu$ is nonuniform, then the measure of the targets depends on their location and we also consider the set
\[
\newRio = \newRio (M_n) := \{\, x : T^n (x) \in B(x, r_n(x)) \text{ for }
\infty \text{ many } n \in \mathbbm{N} \,\},
\]
where $r_n (x)$ is such that $\mu (B(x,r_n(x))) = M_n$.

Another interesting set to consider is the
\emph{eventually-always} analogue of $\Rio$ which is defined as
\begin{align*}
  \ear & =\ear(r_m) \\ & :=\bigl\{ \, x\in X: \exists\, m_0\in
  \N,\ \forall\, m\geq m_0: \bigl\{ T^{k}(x)
  \bigr\}_{k=1}^{m}\cap B(x,r_m)\neq \emptyset \,\bigr\},
\end{align*}
i.\,e., the set of points whose sufficiently long orbit always
hits a sequence of shrinking targets around themselves. In
addition to obtaining a result on $\ear$ in broad generality, we
study the measure of $\Rio$, $\newRio$, and $\ear$ for certain classes of
dynamical systems on the unit interval as well as some linear
maps on the $d$-dimensional torus. Note also that the
eventually-always analogue of $\Hio$, denoted by $\E$ or
sometimes by $\Eah$, was investigated for similar dynamical
systems by the authors in \cite{KKP1} and by Kleinbock,
Konstantoulas and Richter in \cite{KKR}.

\subsection{Known results about $\Rio$, $\widehat{\mathcal{R}}_{\mathbf{io}}$, and $\ear$}

Generally speaking we are interested in the sizes of these sets
and how their sizes depend on the measure of the targets. By
\enquote{size} we mean measure, but if the measure of the set is
zero it is interesting to determine the dimension of the set to
get a more nuanced picture of how small it is. In the setting of
$\beta$-transformations, the Hausdorff dimension of $\ear$ was
computed by Zheng and Wu in \cite{ZhengWu}.

In this paper we focus on the measure of $\Rio$, $\newRio$, and $\ear$. The
study of self-returning points invariably starts with the
Poincar\'e Recurrence Theorem, which may be stated as follows.

\begin{theorem}[{Carath\'{e}odory, 1919 \cite{Caratheodory}}]
  Let $(X,d)$ be a separable metric space and let $\mu$ be a
  finite $T$-invariant Borel measure.  For $\mu$-almost every
  $x\in X$, there exists a subsequence $n_k$ such that
  $T^{n_k}(x)\to x$ as $k \to \infty$.
\end{theorem}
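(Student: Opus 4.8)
The plan is to reduce the statement to the measure-theoretic Poincar\'e recurrence theorem for a single set, and then to use separability to handle all scales simultaneously.

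First I would record the classical single-set version: if $A\in\mathcal{B}$ with $\mu(A)>0$, then for $\mu$-almost every $x\in A$ one has $T^{n}(x)\in A$ for infinitely many $n$. To prove this, put $A_0=\Meng{x\in A}{T^{n}(x)\notin A\text{ for all }n\ge 1}$. If $x\in A_0$ then $T^{n}(x)\notin A_0$ for every $n\ge 1$, so the preimages $\tub{T^{-n}(A_0)}_{n\ge 0}$ are pairwise disjoint; each has measure $\mu(A_0)$ by $T$-invariance, and since $\mu(X)<\infty$ this forces $\mu(A_0)=0$. A point $x\in A$ returning to $A$ only finitely often has a last return time $k\ge 0$, hence $T^{k}(x)\in A_0$ and $x\in T^{-k}(A_0)$; thus the exceptional set is contained in the null set $\bigcup_{k\ge 0}T^{-k}(A_0)$.

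Next I would exploit separability. Fix a countable dense set $\tub{q_j}_{j\in\N}\subseteq X$ and let $\mathcal{U}=\Meng{B(q_j,1/\ell)}{j,\ell\in\N}$, a countable basis of the topology consisting of measurable balls. Let $Z$ be the union of all $U\in\mathcal{U}$ with $\mu(U)=0$, together with, for each $U\in\mathcal{U}$ with $\mu(U)>0$, the null set of points of $U$ that return to $U$ only finitely often; then $\mu(Z)=0$. Now fix $x\notin Z$ and $\epsilon>0$, choose $\ell$ with $2/\ell<\epsilon$, and then $q_j$ with $d(x,q_j)<1/\ell$, so that $x\in U:=B(q_j,1/\ell)\subseteq B(x,\epsilon)$ by the triangle inequality. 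Since $x\in U$ and $x\notin Z$, the ball $U$ has positive measure and $x$ returns to $U$ — hence to $B(x,\epsilon)$ — infinitely often. Applying this successively with $\epsilon=1/m$ and selecting strictly increasing return times yields a subsequence $n_k$ with $d(T^{n_k}(x),x)\to 0$.

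The argument is essentially routine, so there is no serious obstacle; the one point that genuinely requires attention is the inclusion of the null balls of $\mathcal{U}$ in the exceptional set $Z$, which is exactly what allows the final step to invoke recurrence for the neighbourhood $U$ instead of being stranded with a neighbourhood of measure zero — in effect, the harmless but necessary observation that $\mu$ is carried by its own support.
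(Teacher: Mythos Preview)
Your proof is correct and is the standard argument for this classical form of the Poincar\'e Recurrence Theorem. The paper does not actually prove this statement --- it is quoted in the introduction as background --- so there is no proof in the paper to compare against.
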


The conclusion of the theorem can also be rewritten as
\begin{multline*}
  \mu (\{\, x\in X:\exists\, (r_n(x))_{n\in \mathbbm{N}} \text{
    s.t. } r_n (x) \to 0 \text{ as } n \to \infty \text{ and }
  \\ T^{n}(x)\in B (x,r_{n}(x)) \text{ for $\infty$ many } n\in\N
  \,\})=1.
\end{multline*}
We see that the sequence $r_n$ is allowed to depend on the point
$x$ and the rate of $r_n\to 0$ may also be arbitrarily slow. It
is natural to ask under which circumstances there exists a
certain rate on $r_n\to 0$ which is uniform across all $x\in X$
and which maintains full measure as above.  In his pioneering
paper \cite{Bos}, Boshernitzan gave the following answer to this
question.
\begin{theorem}[{\cite[Theorem~1.2]{Bos}}]
  \label{thm:Bos}
  Let $(X,T,\mu)$ be a measure preserving system equipped with a
  metric $d$. Let $H_{\alpha}$ denote the Hausdorff
  $\alpha$-measure for some $\alpha>0$ and assume that
  $H_{\alpha}$ is $\sigma$-finite on $X$. Then for $\mu$-almost
  every $x\in X$ we have
  \begin{equation}\label{bos1}
    \liminf_{n\geq 1} \Bigl\{n^{\frac{1}{\alpha}}d(T^n(x),x)
    \Bigr\} < \infty.
  \end{equation}
  Furthermore, if $H_{\alpha}(X)=0$, then for $\mu$-almost every
  $x\in X$ we have
  \begin{equation}\label{bos2}
    \liminf_{n\geq 1} \Bigl\{n^{\frac{1}{\alpha}}d (T^n (x),x)
    \Bigr\}=0.
  \end{equation}
\end{theorem}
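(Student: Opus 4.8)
The plan is to prove the contrapositive of~\eqref{bos1}: the \enquote{divergence set} $\mathcal D:=\{x\in X : n^{1/\alpha}d(T^n(x),x)\to\infty\}$ has $\mu(\mathcal D)=0$. I would first make two reductions. Observe that $\mathcal D$ contains no periodic point, since $T^q(x)=x$ forces $d(T^{nq}(x),x)=0$ along a subsequence, so the limit is not $+\infty$. Next, using that $H_\alpha$ is $\sigma$-finite, write $\mathcal D=\bigcup_j\mathcal D_j$ with each $\mathcal D_j$ Borel and $H_\alpha(\mathcal D_j)<\infty$ (all sets occurring below are readily seen to be Borel); it then suffices to show $\mu(\mathcal D_j)=0$ for each $j$, which I would do by contradiction, assuming $p:=\mu(\mathcal D_j)>0$ and setting $M:=H_\alpha(\mathcal D_j)$.

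I would then localise and clean up. Fix a large parameter $C$, to be chosen at the very end in terms of $M,p,\alpha$ only. Since every $x\in\mathcal D_j$ eventually satisfies $n^{1/\alpha}d(T^n(x),x)>C$, the sets $\{x\in\mathcal D_j : d(T^n(x),x)>Cn^{-1/\alpha}\text{ for all }n\geq N\}$ increase to $\mathcal D_j$ as $N\to\infty$, so for a suitable $N=N(C)$ this set, call it $F$, has $\mu(F)\geq p/2$. Now remove from $F$ the points returning $\delta$-close to themselves within the first $N$ steps: the sets $\{x\in F : d(T^k(x),x)<\delta\text{ for some }1\le k\le N\}$ decrease, as $\delta\downarrow0$, to $F\cap\{x:T^k(x)=x\text{ for some }1\le k\le N\}=\emptyset$ (no periodic points in $F$), so for $\delta$ small enough the remaining set $F'\subseteq F$ still has $\mu(F')\geq p/4$. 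For every $x\in F'$ one now has $d(T^n(x),x)\geq\delta$ for $1\le n\le N$ \emph{and} $d(T^n(x),x)>Cn^{-1/\alpha}$ for $n\geq N$; shrinking $\delta$ so that $(C/\delta)^\alpha\geq N$, these two facts combine into the single statement: \emph{for every $x\in F'$ and every set $U\ni x$ with $\operatorname{diam}U<\delta/2$, we have $T^n(x)\notin U$ for all $1\le n\le L(\operatorname{diam}U)$}, where $L(\rho):=\lfloor(C/2\rho)^\alpha\rfloor$. Crucially, $\delta$ (hence $N$) no longer appears here.

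The heart of the argument is then a covering count. Cover $F'$ by sets $U_i$ with $\operatorname{diam}U_i<\delta/2$ and $\sum_i(\operatorname{diam}U_i)^\alpha\leq M+1$, possible since $H_\alpha(F')\leq M<\infty$. The elementary input is that, for any measurable $B$ and $L\in\N$, the set $B^{\ast}:=\{x\in B : T^j(x)\notin B\text{ for }1\le j\le L\}$ satisfies $\mu(B^{\ast})\leq\mu(X)/(L+1)$, because $B^{\ast},T^{-1}B^{\ast},\dots,T^{-L}B^{\ast}$ are pairwise disjoint (if $T^i(x)$ and $T^j(x)$ both lie in $B^{\ast}$ with $i<j\le L$, then applying the defining property of $B^{\ast}$ to $T^i(x)$ shows $T^j(x)\notin B$, a contradiction) and $T$ is $\mu$-preserving. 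Applying this with $B=U_i$ and $L=L(\operatorname{diam}U_i)$, the property of $F'$ above gives $F'\cap U_i\subseteq U_i^{\ast}$, hence $\mu(F'\cap U_i)\leq\mu(X)/(L(\operatorname{diam}U_i)+1)\leq 2^\alpha\mu(X)\,C^{-\alpha}(\operatorname{diam}U_i)^\alpha$. Summing over $i$, $\mu(F')\leq 2^\alpha\mu(X)\,C^{-\alpha}\sum_i(\operatorname{diam}U_i)^\alpha\leq 2^\alpha\mu(X)(M+1)C^{-\alpha}$; since $\mu(F')\geq p/4$ independently of $C$, taking $C$ large gives a contradiction, proving~\eqref{bos1}. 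For~\eqref{bos2} I would run the identical scheme with $C$ replaced by an arbitrary fixed $\epsilon>0$ and with $F$ taken to be $\{x : d(T^n(x),x)>\epsilon n^{-1/\alpha}\text{ for all }n\geq N\}$ for each fixed $N$ (again periodic-point-free); since now $H_\alpha(X)=0$, the cover can be chosen with $\sum_i(\operatorname{diam}U_i)^\alpha$ as small as desired, so the same estimate forces that set to be null, and hence $\mu(\{x : \liminf_n n^{1/\alpha}d(T^n(x),x)\geq\epsilon\})=0$ for every $\epsilon>0$.

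The step I expect to be the main obstacle is calibrating the recurrence estimate to the \emph{variable} scale $L(\operatorname{diam}U_i)\asymp(C/\operatorname{diam}U_i)^\alpha$, so that the per-cell bound acquires the factor $(\operatorname{diam}U_i)^\alpha$ and the sum is then controlled by the Hausdorff content $\sum_i(\operatorname{diam}U_i)^\alpha$; a naive version — working at a single fixed scale, or omitting the excision and thereby allowing returns within the first $N$ steps — only yields a bound of size $(M+1)C^{-\alpha}$ times an uncontrolled factor depending on $N=N(C)$, which is useless since $N(C)$ may grow arbitrarily fast with $C$. Making the excision decouple the final estimate from $N$ while costing only an arbitrarily small amount of $\mu$-measure, which is exactly where it matters that $\mathcal D$ avoids periodic points, is the delicate point.
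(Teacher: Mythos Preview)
The paper does not prove Theorem~\ref{thm:Bos}; it is quoted from Boshernitzan~\cite{Bos} as background, so there is no in-paper proof to compare against. Your argument is correct and is, in essence, Boshernitzan's original proof: the elementary return-time inequality $\mu(B^{\ast})\le\mu(X)/(L+1)$ (the same device the paper records as $\mu(V(t))\le 1/t$ in the proof of Theorem~\ref{theo:EARgen}) is applied to the pieces $U_i$ of a fine Hausdorff cover with the scale-dependent choice $L\asymp(\operatorname{diam}U_i)^{-\alpha}$, so that the per-piece bound carries the factor $(\operatorname{diam}U_i)^{\alpha}$ and the total is controlled by the Hausdorff sum. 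Your excision step---removing short $\delta$-returns, using that $\mathcal D$ contains no periodic points, in order to make the final estimate independent of $N(C)$---is precisely the delicate point in Boshernitzan's argument, and you have handled it correctly.
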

Note that in general, if $\alpha>\dim_\textup{H}(X)$ then
$H_{\alpha}(X)=0$ and $H_{\alpha}$ is (trivially) $\sigma$-finite
on $X$. In many cases, for example when $X=\R^{k}$ with the
Euclidian metric, we have that $H_{\alpha}$ is $\sigma$-finite on
$X$ if and only if $\alpha\geq \dim_\textup{H}(X)$. In this paper
we will mainly focus on interval maps, and hence we take a closer
look at Theorem~\ref{thm:Bos} when $X=[0,1]$. Then $H_{\alpha}$
is $\sigma$-finite on $X$ for all $\alpha\geq 1$ and
$H_{\alpha}(X)=0$ for all $\alpha>1$. Statement \eqref{bos1} then
corresponds to the case $\alpha=1$ and can be reformulated as:
For $\mu$-almost every $x\in X$ there exists a constant
$\kappa(x)>0$ such that if $r_n(x)\geq \frac{\kappa(x)}{n}$, then
$T^n(x)\in B (x,r_n(x))$ for infinitely many $n\in\N$, i.\ e.\
\begin{equation*}
 \mu(\{\, x: T^n(x)\in B (x,r_n(x)) \text{ for } \infty
     \text{ many } n\in\N \,\}) = 1.
\end{equation*}
Statement \eqref{bos2} corresponds to the case $\alpha>1$ and
enables us to get rid of the $x$-dependence of the radii by
decreasing the shrinking rate slightly. As a consequence,
for any $\beta<1$ and any $\kappa>0$, we have that
\begin{equation*} 
  r_n\geq \frac{\kappa}{n^{\beta}}\enskip\Rightarrow\enskip
  \mu(\Rio(r_n))=1.
\end{equation*}
Boshernitzan's result is surprisingly strong given its level of
generality. Since then much work has been done on the topic of
self-returning points. However, it appears that even with much
stronger assumptions on the system, few improvements of
Boshernitzans rate of $n^{\frac{1}{\alpha}}$ have been
obtained. As far as we know, the only improvements were obtained
by Pawelec \cite[Theorem~3.1]{Paw2017}; Chang, Wu and Wu
\cite{Chang}; Baker and Farmer \cite{Baker}; and recently by
Hussein, Li, Simmons and Wang \cite{Husseinetal}.

Pawelec proved that Boshernitzan's rate can be improved by a
factor $(\log\log n)^{\frac{1}{\alpha}}$ under the assumption of
exponential mixing as well as a regularity assumption on the
invariant measure which is related to the value of
$\alpha$.

Chang, Wu and Wu as well as Baker and Farmer obtained
improvements to Boshernitzan's result for self-similar
sets. Hussein, Li, Simmons and Wang obtained a dichotomy result
for some expanding conformal systems, including piecewise
expanding maps with an absolutely continuous invariant
measure. For such piecewise expanding maps, their result is that
$\mu(\Rio(r_n)) = 1$ if and only if $\sum r_n = \infty$, and
otherwise $\mu (\Rio (r_n)) = 0$.

A different perspective on the Boshernitzan result is given
through the strong connection between the speed with which a
typical point returns close to itself, and the local property of
the measure. Let
\[
\tau_r (x) = \inf \{\, n \in \mathbbm{N} : d (T^n (x), x) < r \,\}
\]
and
\[
\underline{R} (x) = \liminf_{r \to 0} \frac{\log \tau_r (x)}{-
  \log r} \qquad \text{and} \qquad \overline{R} (x) = \limsup_{r
  \to 0} \frac{\log \tau_r (x)}{- \log r}.
\]
Barreira and Saussol \cite{BarreiraSaussol} proved that if $\mu$
is an invariant probability measure, then for $\mu$ almost every
$x$ holds
\begin{equation} \label{eq:barreirasaussol}
  \underline{R} (x) \leq \underline{d}_\mu (x) \qquad \text{and}
  \qquad \overline{R} (x) \leq \overline{d}_\mu (x),
\end{equation}
where $\underline{d}_\mu (x)$ is the lower pointwise dimension of
$\mu$ at $x$ and $\overline{d}_\mu (x)$ is the upper pointwise
dimension of $\mu$ at $x$, defined by
\[
\underline{d}_\mu (x) = \liminf_{r \to 0} \frac{\log \mu
  (B(x,r))}{\log r} \qquad \text{and} \qquad \overline{d}_\mu (x)
= \limsup_{r \to 0} \frac{\log \mu (B(x,r))}{\log r}.
\]

Suppose for simplicity that $x$ is a point such that
$\underline{d}_\mu (x) = \overline{d}_\mu (x) = s$ and
\eqref{eq:barreirasaussol} holds. Then for any $\varepsilon > 0$
we have $\tau_r (x) \leq r^{-(s+\varepsilon)}$ for small
$r$. This tells us that if $r_n = n^{-\alpha}$, then $d(T^n(x),x)
< r_n$ holds for infinitely many $n$ if $\alpha
(s+\varepsilon) \leq 1$. In other words, for any $\varepsilon > 0$
we have that $d(T^n(x),x) < n^{-\frac{1}{s+\varepsilon}}$ holds
for infinitely many $n$. If $\underline{d}_\mu (x) =
\overline{d}_{\mu} (x) = s$ for $\mu$-almost every $x$, the
conclusion obviously holds almost surely. Formulated in another
way, if $\underline{d}_\mu (x) = \overline{d}_{\mu} (x) = s$ for
$\mu$-almost every $x$, then $\mu(\Rio (n^{-\alpha})) = 1$ if $\alpha
< \frac{1}{s}$.


Hence, the result of Barreira and Saussol is similar to the
result of Boshernitzan. However, the result of Barreira and
Saussol gives information about the return time $\tau_r$, which
the result of Boshernitzan does not.

\subsection{Outline of the paper}

In this paper we prove various strengthenings of the known
results on the measure of $\Rio$. In Theorem~\ref{thm:riorate},
we show that the rate given by Pawelec can be significantly
improved for a large class of interval maps, including some
quadratic maps. For this result, we need only an assumption on
decay of correlations and that the invariant measure is
absolutely continuous with respect to Lebesgue measure. A similar
result is in Theorem~\ref{thm:riorate2}, where we are also to
obtain sufficient conditions for $\mu(\newRio)=1$ for systems
with an invariant measure that is not absolutely continuous with
respect to Lebesgue measure, but satisfies a regularity
assumption of the same type as used by Pawelec.
 
In Theorem~\ref{thm:RioMeasZero} we give general sufficient
conditions for $\Rio$ and $\newRio$ to be of zero measure under mixing
assumptions.

We then turn our attention to the case of the doubling map as
well as some linear maps on the $d$-dimensional torus for which
we are able to prove in Theorem \ref{thm:doublingdichotomy} an exact dichotomy for when $\Rio$ is of
zero and full measure. In addition to rotations\footnote{Note
that for a rotation $R_\alpha$, we have $|R_\alpha^n (x) - x| =
|R_\alpha^n (0) + x - x| = |R_\alpha^n (0)|$. Hence $|R_\alpha^n
(x) - x| < r_n$ for infinitely many $n$ iff $|R_\alpha^n (0)| <
r_n$ for infinitely many $n$. Hence there is a kind of dichotomy
which gives either $\Rio = \emptyset$ or $\Rio = \mathbbm{S}^1=X$,
depending on a condition on $\alpha$ and $r_n$. By the
Duffin--Schaeffer conjecture (now a theorem of Koukoulopoulus and
Maynard \cite{KM}), for almost all $\alpha$, we have $|R_\alpha
(0)| < r_n$ for infinitely many $n$ iff $\sum_n \varphi(n) r_n$
diverges, where $\varphi$ is Euler's totient function. Hence for
almost all $\alpha$ we have the dichotomy that $\Rio$ is empty or
the entire circle depending on the convergence or divergence of
this series. However, for a given rotation number it is not clear
whether it belongs to this full measure set and hence if the
divergence of the series is the condition which determines the
dichotomy.}, exact dichotomy results were previously only known
for some self-similar sets equipped with the transformation
induced by the left shift on the coding, which have recently been
shown under the strong separation condition by Chang, Wu and Wu
\cite{Chang} and under the open set condition by Baker and Farmer
\cite{Baker}. And as previously mentioned, Hussein, Li, Simmons
and Wang \cite{Husseinetal} have shown exact dichotomy results
for some conformal and expanding systems.

Finally, we consider the set $\ear$ of eventually always
returning points.  In Theorem~\ref{theo:EARgen} we prove a result
in similar generality as Boshernitzan's Theorem on $\Rio$. For
the doubling map we give sufficient conditions for $\ear$ to be
of zero and full measure in Theorem~\ref{thm:EAR}. As for all
known results on $\E$ there is a range of shrinking rates not
allowing any conclusions on the size of $\ear$. It is an open
question whether one can prove a dichotomy condition on $\E$ or
$\ear$ for any system (see \cite[Question 28]{KKR}). 

In the next
section we state the main theorems and provide some intuition to
the results and their significance.

\section{Main results}

\subsection*{On the measure of $\Rio$ for a class of mixing interval maps}

Here we consider the case $X = [0,1]$. We will need the following
definition.

\begin{defn}[Decay of correlations for $L^1$ against $BV$] \label{defn:decay}
  Let $([0,1],T,\mu)$ denote a measure-preserving system. We say
  that correlations for the system decay as $p \colon \N \to \R$
  for $L^1$ against $BV$ (bounded variation), if
  \begin{equation*}
    \biggl| \int f \circ T^n g \, \mathrm{d} \mu - \int f \,
    \mathrm{d} \mu \int g \, \mathrm{d} \mu \biggr| \leq \lVert f
    \rVert_1 \lVert g \rVert_{BV} p (n)
  \end{equation*}
  holds for all $n$ and all functions $f$ and $g$ with $\lVert f
  \rVert_1\coloneqq \int |f| \, \mathrm{d} \mu <\infty $, $\lVert g
  \rVert_{BV}\coloneqq \var g + \sup |g|<\infty$, where $\var g$
  denotes the total variation of $g$. If $\sum_n p(n) < \infty$, then
  we say that the correlations are summable.
\end{defn}

Our first main result is the following.

\begin{maintheorem}\label{thm:riorate}
  Suppose that the system $([0,1],T,\mu)$ has exponential decay of
  correlations for $L^1$ against $BV$, and that $\mu$ is
  absolutely continuous with respect to Lebesgue measure with a
  density $h$ that is bounded away from zero and which belongs to
  $L^q$ for some $q > 1$.

  Let $r_n$ be a sequence of real numbers such that for any $c > 0$ we
  have
  \begin{equation} \label{eq:sequencecondition}
    \limsup_{N \to \infty} \sum_{n = c \log N}^N r_n = \infty
  \end{equation}
  Then $\mu (\Rio) = 1$.
\end{maintheorem}

\begin{remark}
  The condition \eqref{eq:sequencecondition} is strictly stronger than
  the condition $\sum^{\infty}_{n=1} r_n = \infty$. However,
  \eqref{eq:sequencecondition} is satisfied for many sequences, for
  instance if
  \[
  r_n \geq \frac{1}{n} \frac{1}{\prod^p_{j = 1} \log_j n}
  \]
  holds for some natural number $p$, where $\log_j$ denotes the
  logarithm iterated $j$ times.
  
  In particular, in the language of Boshernitzan and Pawelec the above
  result states that for any $p$ and for $\mu$-almost any $x\in [0,1]$
  \begin{equation*}
    \liminf_{n\geq 1} \biggl\{ n \biggl( \prod_{j = 1}^p \log_j n
    \biggr) d(T^n(x),x) \biggr\}=0.
  \end{equation*}
\end{remark}

\begin{remark}
  Systems which satisfies the assumptions of Theorem~\ref{thm:riorate}
  include some piecewise expanding maps \cite{Rychlik}, and quadratic
  maps with Benedicks--Carleson parameters as was proved by Young
  \cite{Young}. For piecewise expanding maps, the
  result of Hussein, Li, Simmons and Wang \cite{Husseinetal} is
  stronger than ours, since they only require that $\sum r_n =
  \infty$.  However, the result for quadratic maps is new.

  We remark also that recently Bylund has obtained results about
  the recurrence of the critical point in the quadratic family
  $f_a (x) = 1 - ax^2$ \cite{Bylund}. He gives a condition on
  $r_n$ which implies that the critical point $0$ belongs to
  $\Rio$ for a positive measure set of parameters. His condition is
  satisfied for instance for $r_n \geq \kappa / (n \log \log n)$,
  where $\kappa > 0$.
\end{remark}

Our method to prove Theorem~\ref{thm:riorate} allows us to also
consider more general measures than those that are absolutely
continuous with respect to Lebesgue measure. For such systems, it
is more natural to consider the set
\[
\newRio (M_n) = \{\, x : T^n (x) \in B(x, r_n(x)) \text{ for }
\infty \text{ many } n \in \mathbbm{N} \,\},
\]
where $r_n (x)$ is such that $\mu (B(x,r_n(x))) = M_n$. We will
prove the following theorem.

\begin{maintheorem} \label{thm:riorate2}
  Suppose that the system $([0,1],T,\mu)$ has exponential decay
  of correlations for $L^1$ against $BV$, and that there are
  constants $c$ and $s$ such that
  \[
  \mu (B(x,r)) \leq c r^s
  \]
  holds for all balls $B(x,r)$.

  Let $M_n$ be a sequence of real numbers such that for any $c > 0$ we
  have
  \begin{equation} \label{eq:sequencecondition2}
    \limsup_{N \to \infty} \sum_{n = c \log N}^N M_n = \infty
  \end{equation}
  Then $\mu(\newRio(M_n)) = 1$.
\end{maintheorem}

\begin{remark}
  When the invariant measure $\mu$ is not absolutely continuous with
  respect to Lebesgue measure, Theorem~\ref{thm:riorate2} is new also
  for piecewise expanding systems. For piecewise expanding systems,
  the result of Hussein, Li, Simmons and Wang \cite{Husseinetal} is
  only valid for measures that are absolutely continuous with respect
  to Lebesgue measure.
\end{remark}

Our next theorem concerns a sufficient condition for $\Rio$ and
$\newRio$ to be of zero measure.

\begin{maintheorem}\label{thm:RioMeasZero}
  Let $([0,1],T,\mu)$ denote a measure-preserving system for
  which correlations for $L^1$ against $BV$ are summable. Then
  \[
  \sum_{n=1}^{\infty} \int \mu(B(x,r_n)) \, \mathrm{d}
  \mu(x)<\infty \enskip\Rightarrow\enskip \mu(\Rio (r_n))=0
  \]
  and
  \[  
  \sum_{n=1}^{\infty} M_n < \infty \enskip\Rightarrow\enskip
  \mu(\newRio (r_n))=0.
  \]
\end{maintheorem}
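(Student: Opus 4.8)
The strategy is the Borel--Cantelli method. Write $A_n := \{\, x\in X : d(T^n(x),x) < r_n \,\}$, so that $\Rio(r_n) = \limsup_{n\to\infty} A_n$. Since the correlations are summable we have $\sum_n p(n)<\infty$, so it suffices to establish an estimate of the form
\[
  \mu(A_n) \;\le\; \int \mu(B(x,r_n))\,\mathrm{d}\mu(x) \;+\; C\,p(n)
\]
with a constant $C$ not depending on $n$. Summing over $n$ and using the hypothesis $\sum_n\int\mu(B(x,r_n))\,\mathrm{d}\mu(x)<\infty$ then gives $\sum_n\mu(A_n)<\infty$, whence $\mu(\Rio(r_n))=\mu(\limsup_n A_n)=0$. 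The point requiring care is that the ball in this estimate must have radius \emph{exactly} $r_n$, with no enlargement: in general $\sum_n\int\mu(B(x,r_n))\,\mathrm{d}\mu(x)<\infty$ does not force $\sum_n\int\mu(B(x,\lambda r_n))\,\mathrm{d}\mu(x)<\infty$ for $\lambda>1$.

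To prove the estimate, fix $n$ (we may assume $r_n>0$, otherwise $A_n=\emptyset$) and an auxiliary parameter $\varepsilon\in(0,r_n)$, and set $A_n^\varepsilon := \{\, x : d(T^n(x),x)\le r_n-\varepsilon \,\}$; since $A_n^\varepsilon\uparrow A_n$ as $\varepsilon\downarrow0$, it suffices to bound $\mu(A_n^\varepsilon)$ by the right-hand side above uniformly in $\varepsilon$. Partition $X$ into consecutive intervals $I_1,\dots,I_N$ of diameter at most $\varepsilon/2$, and for each $j$ put $G_j := \{\, y : d(y,I_j)\le r_n-\varepsilon \,\}$, which is again an interval. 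Two elementary observations drive the argument. First, if $x\in A_n^\varepsilon$ and $T^n(x)\in I_j$, then $d(x,I_j)\le d(x,T^n(x))\le r_n-\varepsilon$, so $x\in G_j$; consequently $\een_{A_n^\varepsilon}(x)\le\sum_{j=1}^N(\een_{I_j}\circ T^n)(x)\,\een_{G_j}(x)$. Second, for every $x\in I_j$ one has $G_j\subseteq B(x,r_n)$, because any $y\in G_j$ satisfies $d(y,x)\le d(y,I_j)+\operatorname{diam}(I_j)\le(r_n-\varepsilon)+\varepsilon/2<r_n$; the slack $\varepsilon$ is exactly what makes this containment strict.

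Integrating the pointwise bound and applying the decay of correlations hypothesis to each summand with $f=\een_{I_j}$ (so $\lVert f\rVert_1=\mu(I_j)$) and $g=\een_{G_j}$ (the indicator of an interval, so $\lVert g\rVert_{BV}\le 3$) gives
\[
  \mu(A_n^\varepsilon) \;\le\; \sum_{j=1}^N\bigl(\mu(I_j)\mu(G_j)+3\,\mu(I_j)\,p(n)\bigr).
\]
It is essential here that $T^n$ acts on the indicator of the \emph{small} set $I_j$: then the error term sums to $3p(n)\sum_j\mu(I_j)=3p(n)$, independently of the mesh of the partition, so that refining the partition costs nothing. For the main term, the second observation gives $\mu(G_j)\le\mu(B(x,r_n))$ for every $x\in I_j$, hence $\mu(I_j)\mu(G_j)\le\int_{I_j}\mu(B(x,r_n))\,\mathrm{d}\mu(x)$ and, summing over $j$, $\sum_j\mu(I_j)\mu(G_j)\le\int\mu(B(x,r_n))\,\mathrm{d}\mu(x)$. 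Therefore $\mu(A_n^\varepsilon)\le\int\mu(B(x,r_n))\,\mathrm{d}\mu(x)+3p(n)$; letting $\varepsilon\downarrow0$ yields the required estimate with $C=3$, and Borel--Cantelli finishes the proof as explained above.

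The genuinely delicate step I expect is to manage the two geometric constraints simultaneously: the partition must be fine enough (mesh $\lesssim\varepsilon$) so that $G_j\subseteq B(x,r_n)$ holds for \emph{all} $x\in I_j$, while the error term coming from the decay of correlations must remain bounded as the partition is refined. The latter is secured by letting $T^n$ act on the indicators of the small intervals rather than on their enlargements $G_j$; a more naive pairing, or omitting the $\varepsilon$-shrinkage, only yields $\mu(B(x,\lambda r_n))$ with some $\lambda>1$ in the bound, which does not match the hypothesis as stated.
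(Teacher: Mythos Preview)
Your proof is correct and follows essentially the same route as the paper: establish $\mu(A_n)\le\int\mu(B(x,r_n))\,\mathrm{d}\mu(x)+3\,p(n)$ via a partition-and-decorrelate argument, then apply Borel--Cantelli. The paper packages the key estimate as a general lemma (bounding $\lvert\int F(T^nx,x)\,\mathrm{d}\mu-\iint F\,\mathrm{d}\mu\,\mathrm{d}\mu\rvert$ for indicators of convex sets in $[0,1]^2$, handled by continuous approximation followed by partitioning), whereas you inline the argument and replace the continuous approximation by your $\varepsilon$-shrinkage of $A_n$; the partitioning and the use of the $L^1$-against-$BV$ correlation bound are the same.
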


\subsection*{Dichotomy results on the measure of $\Rio$ for some
  linear maps}

For some linear maps we are able to prove an exact dichotomy for
when $\Rio$ is of zero and full measure.

\begin{maintheorem}\label{thm:doublingdichotomy}
  Let $X = \mathbbm{T}^d = [0,1]^d$, $T(x)= A x \mod 1$, where
  $A$ is an integer matrix such that no eigenvalue is a root of
  unity.  Let $\mu$ denote the Lebesgue measure on $X$ and let
  $r_n$ be a sequence of non-negative numbers. Then
  \[
  \sum_{n=1}^{\infty} r_n^d < \infty \enskip\Rightarrow\enskip
  \mu(\Rio(r_n))=0.
  \]
  Moreover, if all eigenvalues of $A$ are outside the unit
  circle, then
  \[
  \sum_{n=1}^{\infty} r_n^d =\infty \enskip\Rightarrow\enskip
  \mu(\Rio(r_n))=1.
  \]
\end{maintheorem}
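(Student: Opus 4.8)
The plan is to treat the two implications separately, since they require genuinely different ideas. For the convergence direction, observe that $T^n(x) \in B(x, r_n)$ means $A^n x - x \in B(0, r_n) \pmod 1$, i.e.\ $(A^n - I)x \in B(0, r_n) \pmod 1$. Writing $M_n = A^n - I$, the hypothesis that no eigenvalue of $A$ is a root of unity guarantees $\det M_n \neq 0$, so $M_n$ is invertible over $\mathbbm{Q}$, and the preimage under $x \mapsto M_n x \bmod 1$ of a small ball $B(0,r_n)$ is a union of $|\det M_n|$ translated ellipsoids, each of volume $r_n^d \cdot c_d / |\det M_n|$ (with $c_d$ the volume of the unit ball). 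Summing over the $|\det M_n|$ components, the Lebesgue measure of $\{ x : (A^n-I)x \in B(0,r_n) \bmod 1\}$ is exactly $c_d r_n^d$, independent of $n$. Then $\sum_n r_n^d < \infty$ lets us apply the Borel--Cantelli lemma directly to conclude $\mu(\Rio(r_n)) = 0$. This direction is essentially a bookkeeping exercise once the lattice-point count is set up carefully; the one subtlety is handling the ``wrap-around'' of the ball $B(0,r_n)$ across the boundary of the fundamental domain, but since we are only computing measures this causes no difficulty.

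For the divergence direction, assuming now that all eigenvalues of $A$ lie strictly outside the unit circle, we want a second-moment / quantitative Borel--Cantelli argument. Let $A_n = \{ x \in \mathbbm{T}^d : (A^n - I)x \in B(0, r_n) \bmod 1 \}$, so $\mu(A_n) = c_d r_n^d$ and $\sum_n \mu(A_n) = \infty$. By the divergence Borel--Cantelli lemma (in the form requiring control of pairwise correlations), it suffices to show
\begin{equation*}
  \sum_{m,n \le N} \mu(A_m \cap A_n) \le C \Bigl( \sum_{n \le N} \mu(A_n) \Bigr)^2 + (\text{lower order}),
\end{equation*}
after which a standard argument gives $\mu(\limsup A_n) > 0$, and then ergodicity of $T$ (which holds since no eigenvalue is a root of unity) upgrades this to $\mu(\Rio(r_n)) = 1$, because $\Rio(r_n)$ is $T$-invariant up to measure zero. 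The key estimate is therefore quasi-independence of the events $A_m$ and $A_n$ for $m < n$. Here $A_m \cap A_n$ is the set where $(A^m - I)x$ and $(A^n - I)x$ are both small mod $1$; the hyperbolic expansion should make this essentially a product set up to a bounded factor, giving $\mu(A_m \cap A_n) \le C \mu(A_m)\mu(A_n)$. One clean way to obtain this is via the decay-of-correlations machinery: expanding toral automorphisms (more precisely, the maps $x \mapsto Ax$ with all eigenvalues off the unit circle, after a standard reduction) enjoy exponential decay of correlations for suitable function classes, and one can try to invoke an analogue of Theorem~\ref{thm:RioMeasZero}-type estimates in reverse. Alternatively, and perhaps more transparently, one reduces mod the common denominator of the relevant matrices: $A_m \cap A_n$ is a union of translates of a small convex set, and counting these translates against $|\det(A^m - I)| \cdot |\det(A^n - I)|$ versus $|\det(\text{combined map})|$ yields the bound.

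The main obstacle I anticipate is precisely this quasi-independence estimate $\mu(A_m \cap A_n) \le C\,\mu(A_m)\,\mu(A_n)$, uniformly in $m < n$. The difficulty is that $A^m - I$ and $A^n - I$ are far from ``independent'' as linear maps --- they share the same eigenspaces --- so one cannot naively treat $((A^m-I)x, (A^n-I)x)$ as an equidistributed point in $\mathbbm{T}^{2d}$. The resolution should exploit that, because all eigenvalues of $A$ are outside the unit circle, the map $A^n - I$ expands by a factor comparable to that of $A^n$, so the fibers of $x \mapsto (A^n - I)x \bmod 1$ over a small ball, when further intersected with $A_m$, are distributed at a much finer scale than the geometry of $A_m$ and hence ``see'' $A_m$ as essentially uniform. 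Making this rigorous --- controlling the number of lattice translates and the overlap combinatorics uniformly in both $m$ and $n$ --- is the technical heart of the argument, and it is also where the hypothesis that \emph{all} eigenvalues are outside the unit circle (rather than merely no root of unity) gets used in an essential way.
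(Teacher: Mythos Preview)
Your convergence direction is fine and essentially equivalent to the paper's computation: the paper gets $\mu(E_n)=c_d r_n^d$ via Fourier orthogonality (only the zero mode survives because $(A^n-I)^T$ is invertible), while you get the same number by direct volume counting. Either works.

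The divergence direction has two real problems.

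\textbf{First, the upgrade from positive to full measure.} Your claim that $\Rio(r_n)$ is $T$-invariant up to measure zero is not correct. If $|T^n x-x|<r_n$, then applying $T$ gives only $|T^n(Tx)-Tx|\le \|A\|\,r_n$, which places $Tx$ in $\Rio$ for the \emph{dilated} radii $\|A\|\,r_n$, not the original ones. The paper explicitly notes (in the proof of Theorem~\ref{thm:riorate}) that $\limsup E_j$ is not invariant, and for Theorem~\ref{thm:doublingdichotomy} it avoids the issue altogether: instead of proving $\mu(\Rio)>0$ and then invoking ergodicity, the paper approximates $\mathbbm{1}_{B(0,r_n)}$ by a smooth $\widetilde G_n$ supported in $B(0,(1+\varepsilon)r_n)$, runs the second-moment argument with $H=(1+\varepsilon)^{2d}$, obtains $\mu(\Rio)\ge (1+\varepsilon)^{-2d}$ directly from Petrov's lemma, and then lets $\varepsilon\to 0$. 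No invariance is needed.

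\textbf{Second, the correlation estimate.} The uniform bound $\mu(A_m\cap A_n)\le C\,\mu(A_m)\mu(A_n)$ that you aim for is both stronger than necessary and not what actually holds; the obstruction is exactly the resonance you flag but do not resolve. The paper's mechanism is Fourier-analytic: expanding the product $\widetilde G_m(T^m x-x)\,\widetilde G_n(T^n x-x)$ in characters, the only surviving terms come from integer vectors $(k,l)$ with $(A^m-I)^T k+(A^n-I)^T l=0$. The heart of the proof is Lemma~\ref{lem:ufnarovski}, which shows that these solutions are parametrised by a single $j\in\mathbb Z^d$ via $k=(I+B^p+\cdots+B^{n-p})j$, $l=(I+B^p+\cdots+B^{m-p})j$, where $B=A^T$ and $p=\gcd(m,n)$. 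Combined with Fourier decay $|\widetilde c_{n,l}|\le C r_n^{-r}|l|^{-r}$ and the growth $|A_m j|\gtrsim \lambda^{m-p}|j|$, this gives
\[
\mu(E_m\cap E_n)-H\mu(E_m)\mu(E_n)\;\lesssim\; n^{2r}m^{2r}\lambda^{r(2p-m-n)},
\]
and the right-hand side is summable over $m,n$ because $2\gcd(m,n)-m-n\le -\max(m,n)/2$ after splitting according to whether $n\le m/2$ or $n>m/2$. This $\gcd$-dependent error term, not a uniform quasi-independence constant, is what makes the argument go through; your heuristic that ``fibers at a finer scale see $A_m$ as uniform'' does not detect this structure and would not, as stated, handle pairs like $(m,2m)$ where $p=m$.
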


Note that the doubling map is a special case of the setting in
Theorem~\ref{thm:doublingdichotomy}. In fact, in dimension $d=1$,
Theorem~\ref{thm:doublingdichotomy} also follows from the results
obtained with different methods in \cite{Baker}, \cite{Chang} and
\cite{Husseinetal}.

\subsection*{Quantitative uniform recurrence results}

We turn to the set $\ear$ of eventually always returning
points. To state our result on speed of uniform recurrence in its
full generality we need the subsequent definition.

\begin{defn}
  Let $(X,d)$ be a metric space of finite diameter. For any $r >
  0$, let $N(r)$ denote the minimal number of balls of radius $r$
  that are needed to cover the space $X$. The number
  \[
  \udimb X = \limsup_{r \to 0} \frac{\log N(r)}{- \log r}
  \]
  is called the upper box dimension of $X$.
\end{defn}

Imitating the proof of Boshernitzan's Theorem~\ref{thm:Bos}, we
prove the following result in Section \ref{sec:EARgeneral}.

\begin{maintheorem} \label{theo:EARgen}
  Let $(X,T,\mu)$ be a measurable dynamical system with an
  invariant probability measure $\mu$ and a compatible metric $d$
  such that $(X,d)$ is a metric space of finite diameter and
  finite upper box dimension $\alpha > 0$. For every $\beta >
  \alpha$ and for $\mu$ almost every $x$ holds
  \[
  \lim_{m \to \infty} m^{\frac{1}{\beta}} \inf_{0 \leq k < m} d
  (x, T^k x) = 0.
  \]
\end{maintheorem}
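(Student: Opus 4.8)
The plan is to adapt the first-moment (Borel--Cantelli) strategy that underlies Boshernitzan's Theorem~\ref{thm:Bos}, replacing the Hausdorff-measure $\sigma$-finiteness input by a covering argument driven by the upper box dimension. Fix $\beta > \alpha$ and choose an auxiliary exponent $\gamma$ with $\alpha < \gamma < \beta$. For each $m$ set $r_m = m^{-1/\beta}$; it suffices to show that for $\mu$-almost every $x$ there is $m_0$ with $\inf_{0\le k<m} d(x,T^k x) < r_m$ for all $m\ge m_0$, since then $m^{1/\beta}\inf_{0\le k<m} d(x,T^k x) \le (m/m')^{1/\beta}\to\ldots$ — more carefully, one runs the argument along the radii $r_m = \varepsilon\, m^{-1/\beta}$ for each fixed $\varepsilon>0$ and lets $\varepsilon\to 0$ through a countable sequence, which yields the stated limit $0$ rather than merely a finite $\liminf$. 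So the real task is: for every $\varepsilon>0$,
\[
  \mu\Bigl(\bigl\{\, x : \exists\, \infty\text{ many } m,\ \inf_{0\le k<m} d(x,T^k x)\ge \varepsilon m^{-1/\beta} \,\bigr\}\Bigr) = 0.
\]

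The key step is the covering estimate. Since $\udimb X = \alpha$, for all small $r$ we can cover $X$ by at most $N(r)\le r^{-\gamma}$ balls of radius $r$; call them $U_1,\dots,U_{N(r)}$. Apply this with $r = r_m/2 = \tfrac{\varepsilon}{2} m^{-1/\beta}$. If $x$ is a point with $\inf_{0\le k<m} d(x,T^k x) \ge r_m$, then in particular $x$ and $T^k x$ lie in different cover elements for every $k$ with $0\le k<m$ whenever those two cover elements have diameter $<r_m$; equivalently, letting $U_j$ be the cover element containing $x$, none of $T^0 x,\dots,T^{m-1}x$ lies in $U_j$. Hence
\[
  \bigl\{\, x : \inf_{0\le k<m} d(x,T^k x)\ge r_m \,\bigr\}
    \ \subseteq\ \bigcup_{j=1}^{N(r_m/2)} \bigl(\, U_j \cap \textstyle\bigcap_{k=0}^{m-1} T^{-k}(X\setminus U_j) \,\bigr).
\]
Now I would bound the measure of the $j$-th set. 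Writing $E_j = \bigcap_{k=0}^{m-1} T^{-k}(X\setminus U_j)$, the set $U_j\cap E_j$ is contained in $T^{-k}(U_j)^{c}$ for each $k<m$; I want to show its measure is $O(\mu(U_j)/m)$ or at least $O(m^{-1}\,\mathrm{poly})$. This is exactly the point where an abstract ergodic/mixing input is needed: by the Poincaré recurrence / Kac-type reasoning, the set of points in $U_j$ that do not return to $U_j$ within $m$ steps has measure at most $C/m$ uniformly (more precisely, $\sum_{m} \mu(U_j\cap E_j^{(m)})$ controls the expected first return time, which is finite and equals $1$ on the ergodic component by Kac's formula; Boshernitzan's original argument packages this via a clever summation). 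Summing over $j$ and then over $m$, one gets
\[
  \sum_{m} \mu\bigl(\{\, x : \inf_{0\le k<m} d(x,T^k x)\ge r_m \,\}\bigr)
    \ \lesssim\ \sum_{m} N(r_m/2)\cdot \frac{C}{m}
    \ \lesssim\ \sum_{m} m^{\gamma/\beta}\cdot \frac{1}{m},
\]
which is a convergent series since $\gamma/\beta < 1$, i.e.\ $\gamma/\beta - 1 < -(1-\gamma/\beta) < -1 + 1$... one needs $\gamma/\beta < 1$ strictly, giving exponent $< 0$ but possibly $> -1$, so in fact one must be more careful: the Kac bound should give $\mu(U_j\cap E_j)\le \mu(U_j)\cdot(C/m)$ with the extra factor $\mu(U_j)\le r_m^{\,?}$ absent in general — here one uses instead that $\sum_j \mu(U_j) \le c$ (bounded overlap of a minimal cover) together with a first-return-time tail bound $\mu(\{x\in U_j : \tau_{U_j}(x) > m\}) \le$ something summable, so that after the double sum one lands on a convergent series; Boshernitzan's Lemma (our Lemma-type input) is precisely engineered so that this works with the single factor $n^{1/\alpha}$, and since $\beta > \alpha$ we have room to spare. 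By Borel--Cantelli the exceptional set has measure zero, and intersecting over a countable sequence $\varepsilon_\ell\downarrow 0$ completes the proof.

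\textbf{Main obstacle.} The genuinely delicate point is the uniform first-return-time tail estimate $\mu(\{x\in U_j : x \text{ does not return to } U_j \text{ in } <m \text{ steps}\}) \lesssim (\text{summable in } m) \times (\text{controlled in } j)$, and balancing it against the combinatorial growth $N(r_m) \approx m^{\gamma/\beta}$ of the cover. This is exactly the mechanism in Boshernitzan's proof of Theorem~\ref{thm:Bos}; the slack provided by $\beta > \alpha$ (rather than $\beta = \alpha$) is what makes the summation converge and is the reason the theorem is stated with strict inequality. One should double-check that the minimal cover can be taken with uniformly bounded overlap, or otherwise absorb the overlap constant into $C$; this is standard for metric spaces of finite box dimension (a Vitali-type argument on a maximal $r$-separated set yields a cover by $r$-balls with overlap bounded in terms of a doubling-type constant, and finite upper box dimension suffices to control the relevant counts along the sequence $r_m$).
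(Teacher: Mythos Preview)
Your overall architecture --- cover $X$ at scale $r_m$, use a return-time estimate on each cover element, sum, and apply Borel--Cantelli --- is the same as the paper's. But the place where you write ``so in fact one must be more careful'' is precisely where the proof lives, and you have not carried it out. The naive bound you write down,
\[
\sum_m N(r_m/2)\cdot \frac{C}{m} \ \lesssim\ \sum_m m^{\gamma/\beta-1},
\]
\emph{diverges} (the exponent lies in $(-1,0)$), as you yourself notice; and the alternative you suggest, $\mu(U_j\cap E_j)\lesssim \mu(U_j)\cdot C/m$, is simply false in this generality (take $U_j$ containing a periodic orbit of period $>m$ of small but positive measure nearby). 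The only universal bound available is the Kac-type inequality
\[
\mu\bigl(\{x\in V:\ T^ix\notin V\text{ for }1\le i\le m\}\bigr)\ \le\ \tfrac{1}{m},
\]
which follows because the preimages $T^{-i}$ of this set for $1\le i\le m$ are pairwise disjoint. This gives $1/m$ per cover element with \emph{no} factor $\mu(U_j)$, so summing over $j$ costs you $N(r_m)$ and the series blows up.

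The paper resolves this with three ingredients you are missing. First, a Vitali refinement produces a \emph{disjoint} partition $\{V_{m,i}\}$ with $\sum_i\mu(V_{m,i})=1$, each $V_{m,i}$ of diameter $\lesssim R_m$. Second, and this is the crucial idea, split the index set according to whether $\mu(V_{m,i})$ is small or large relative to a threshold $\asymp R_m^{\beta'}/p_m$ (with $\alpha<\beta'<\beta$ and $p_m$ a slowly growing parameter): the small-measure pieces have total measure $\lesssim 1/p_m$ by the counting bound $\sum_i R_m^{\beta'}\le 1$, while for each large-measure piece the universal bound $1/m$ is actually $\le \tfrac{1}{2p_m}\mu(V_{m,i})$, so these sum to $\lesssim 1/p_m$ as well. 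Thus the bad set at level $m$ has measure $\lesssim 1/p_m$. Third, since $1/p_m$ is only a small negative power of $m$, one passes to a sparse subsequence $m_j=j^\theta$ to make $\sum_j 1/p_{m_j}$ converge, and then fills in the gaps between consecutive $m_j$ using that $(j+1)^\theta/j^\theta$ is bounded. Your proposal contains none of these three steps; without the small/large dichotomy in particular, the Borel--Cantelli sum cannot be made to converge.
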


By the same consideration as after Theorem~\ref{thm:Bos} this
statement can be reformulated for interval maps in the following
way: For any $\gamma<1$ and any $\kappa>0$ we have that
\begin{equation*} 
  r_n\geq \frac{\kappa}{n^{\gamma}} \text{ for all } n \enskip
  \Rightarrow \enskip \mu(\ear(r_n))=1.
\end{equation*}

In case of the doubling map we can improve this rate and our
results can be summarized as follows.

\begin{maintheorem}\label{thm:EAR}
  Let $X=[0,1]$, $T(x)=2x \mod 1$ and let $\mu$ denote the
  Lebesgue measure.
  \begin{enumerate}
  \item\label{thm:EARZero} Assume that $\lim_{m\to\infty} m
    r_m=0$. Then $\mu(\ear(r_n))=0$.
		
  \item\label{thm:EAROne} Suppose that $h$ is a function such
    that $h(n) \to \infty$ as $n \to \infty$, and let
    \begin{equation*}
      r_m=\frac{\log (m) h(m)}{m}.
    \end{equation*}
    Then $\mu(\ear(r_n))=1$.
  \end{enumerate}
\end{maintheorem}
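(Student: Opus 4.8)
For part~\eqref{thm:EARZero}, the plan is to show that for a typical $x$ the orbit segment $\{T^k x\}_{k=1}^m$ misses $B(x,r_m)$ for infinitely many $m$. Fix $x$ and, for the doubling map, note that $T^k x \in B(x,r_m)$ forces $x$ to lie in one of at most $2^k$ intervals of length roughly $2 r_m/(2^k-1)$ (the preimages of $B(x,r_m)$ under the identification $T^k x \approx x$). Summing over $1 \le k < m$, the set of $x$ whose orbit hits $B(x,r_m)$ within the first $m$ steps has measure $\ll \sum_{k=1}^m 2^k \cdot r_m 2^{-k} = O(m r_m)$. Since $m r_m \to 0$, along a suitable subsequence $m_j$ with $\sum_j m_j r_{m_j} < \infty$ the Borel--Cantelli lemma gives that almost every $x$ eventually avoids $B(x,r_{m_j})$ for all large $j$, so $x \notin \ear$. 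The main thing to be careful about here is the dependence of the ``target'' on $x$: one should phrase the event $\{x : \exists k \le m,\ |T^k x - x| < r_m\}$ directly and estimate its Lebesgue measure via the piecewise-linear branches of $T^k$, rather than trying to apply a Borel--Cantelli statement for a fixed target.

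For part~\eqref{thm:EAROne}, with $r_m = \log(m)h(m)/m$, the plan is to show the complement $\ear^c$ has measure zero, i.e.\ for almost every $x$ and all large $m$ there is some $1 \le k \le m$ with $|T^k x - x| < r_m$. I would work at dyadic scales: let $\ell = \ell(m) \approx \log_2 m$ so that $2^{-\ell}$ is comparable to $1/m$, and partition $[0,1]$ into dyadic intervals of length $2^{-\ell}$. For $x$ in such an interval $J$, the condition $|T^k x - x| < r_m$ is essentially a condition on the first $\ell + O(\log h(m))$ binary digits of $x$; more precisely it suffices that the digit block of $x$ of length $\approx \log_2 h(m)$ starting at position $k+1$ agrees with the block of the same length starting at position~$1$. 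The number of ``bad'' $x$ (those for which \emph{no} $k \le m$ produces such a repetition) can be bounded by a combinatorial counting argument on binary strings of length $\approx \log_2 m$: among $m$ shifts one is very likely to find an initial block repeated, and the count of strings avoiding all such repetitions is $o(2^{\ell}) = o(m)$ once $h(m) \to \infty$, because the extra factor $h(m)$ buys an arbitrarily long matching block. Summing the resulting measure estimate over $m$ (or, more cleanly, over a subsequence of $m$'s spaced so the bounds are summable, then filling in using monotonicity of the relevant events in $m$) and applying Borel--Cantelli finishes the argument.

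I expect the main obstacle to be the quantitative string-counting estimate in part~\eqref{thm:EAROne}: one needs that the number of binary strings of length $L \approx \log_2 m$ in which the prefix of length $t \approx \log_2 h(m)$ does not reappear at any of the first $m - t$ positions is summably small compared to $2^L$. This is where the $\log m$ factor in $r_m$ (rather than just $1/m$) is essential — it provides the room for a matching block of growing length $t(m)$, and the freedom in choosing $h(m) \to \infty$ arbitrarily slowly corresponds exactly to $t(m) \to \infty$ arbitrarily slowly. Handling the boundary effects (the orbit segment must stay within a single dyadic interval long enough for the digit argument, and points near dyadic rationals need separate, negligible treatment) and making the Borel--Cantelli bookkeeping rigorous across all $m$ rather than a subsequence are the routine-but-fiddly parts; the combinatorial core is the real content.
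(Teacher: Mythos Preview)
Your approach to part~\eqref{thm:EARZero} is essentially the paper's: both bound $\mu\bigl(\bigcup_{k\le m} E_{k,m}\bigr)\le 2mr_m$ via $\mu(E_{k,m})=2r_m$. The Borel--Cantelli step along a subsequence is superfluous, since $A_n=\bigcap_{m\ge n}C_m\subset C_m$ for every $m\ge n$ already gives $\mu(A_n)\le\inf_{m\ge n}\mu(C_m)=0$ directly.

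For part~\eqref{thm:EAROne} your symbolic translation has the scales wrong in a way that matters. The condition $|T^kx-x|<r_m$ asks that the first $t:=\lfloor-\log_2(2r_m)\rfloor\approx\log_2 m-\log_2(\log m\cdot h(m))$ binary digits of $x$ reappear starting at position $k+1$; so the block to be matched has length close to $\log_2 m$, \emph{not} $\log_2 h(m)$, and one is searching for it among the first $\approx m$ digits of $x$, \emph{not} within a string of length $\approx\log_2 m$. Your claim that matching a block of length $\approx\log_2 h(m)$ suffices is backwards: such a match only gives $|T^kx-x|\lesssim 1/h(m)$, which is much larger than $r_m$. With the correct parameters, checking $\approx m/t\approx m/\log_2 m$ disjoint shift positions gives independent trials with success probability $2^{-t}\approx 2r_m$ each, and the probability of no success is $\approx(1-2r_m)^{m/\log_2 m}\approx e^{-c\,h(m)}$. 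The role of the $\log m$ in $r_m$ is not to make the matching block short, but to compensate for the spacing $t\approx\log_2 m$ between independent trials so that the expected number of hits is $\approx h(m)\to\infty$.

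This corrected picture is precisely what the paper does, only geometrically rather than symbolically: it restricts to times $p_k=k\Delta_m$ with $\Delta_m=(2+\sigma)\log_2 m$, observes that each $\complement E_{p_k,m}$ is a union of $2^{p_k}-1$ evenly spaced arcs of relative length $1-2r_m$, and shows by iterating over $k$ that imposing one more constraint $x\in\complement E_{p_{k+1},m}$ cuts the surviving measure by a factor close to $1-2r_m$, yielding $\mu(\complement C_m)\lesssim (1-2r_m)^{m/\Delta_m}+\varepsilon_m$. So a corrected version of your plan would amount to the symbolic form of the paper's argument; as written, however, the parameters are off and the sketch would not lead to a proof.
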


\begin{remark}
  Theorem \ref{thm:EAR} holds true as well for transformations
  $T(x)=\beta x \mod 1$ for any $\beta \in \mathbbm{N}$, $\beta
  \geq 2$. The generalization is straightforward.
\end{remark}

We note that, to our knowledge, these are the first known results
on the measure of $\ear$.

\subsection{Intuition and motivation for the main results}

It is instructive to compare the type of statement presented in
Theorem~\ref{thm:riorate} (as well as Theorem~\ref{thm:Bos} and
\cite[Theorem~3.1]{Paw2017}) to the ones in
Theorem~\ref{thm:RioMeasZero} and \ref{thm:doublingdichotomy}.

In the context of $\Hio$, analogues of Theorem
\ref{thm:doublingdichotomy} are known as \emph{Dynamical
Borel--Cantelli lemmas} (DBCL's) and are known to hold for many
systems with nice mixing properties. One desirable feature of
this kind of result is that it gives an exact dichotomy for when
the set in question is of zero or full measure. Another advantage
to this type of statement is that it allows a great deal of
flexibility on the rate with which the targets are allowed to
shrink.

We do a short intermezzo here, clarifying the use of the word
\emph{shrinking} when referring to the targets. In DBCL's the
usual assumption is that the sum of the measure of the targets is
either finite or infinite. Hence shrinking in this context refers
to the \emph{measure} of the targets. (If the targets are nested,
then they are necessarily also shrinking in a geometric sence.)
This formulation also allows for more general targets than metric
balls when $X$ has more complex geometry than in our case.

The direct analogue of dynamical Borel--Cantelli lemmas for
$\Rio$ is to consider a convergence/divergence criteria for the
sum of the \emph{average} measure of the
targets. Theorem~\ref{thm:RioMeasZero} gives an example of the
convergence part of this type of condition. The averaging is
clearly necessitated by each sequence of targets being located in
a different region of the space $X$. Hence for any non-uniform
measure $\mu$, the measure of the targets depend on their
location. This also led to the introduction of the set $\newRio$. In principle Theorem~\ref{thm:doublingdichotomy} also
gives an example of a such condition, however, due to the
uniformity of the Lebesgue measure the averaging condition
collapses to a condition simply on the sum of the radii of the
targets.

In contrast, Theorem~\ref{thm:Bos} makes only an assumption on
the rate with which the radii $r_n$ go to zero, hence in this
context shrinking refers to the \emph{radii} of the balls around
$x$. Since Boshernitzan only assumes invariance of the measure,
no explicit connection between the radii of the balls and their
measure is assumed. However, the assumption in
Theorem~\ref{thm:Bos} that the space $X$ is $\sigma$-finite with
respect to the $\alpha$-dimensional Hausdorff measure, implies
that the set of points for which the local dimension of the
invariant measure $\mu$ is larger than $\alpha$, must be a small
set. Hence, there is implicitely present a weak assumption on the
connection between radii of most balls and their measure.

As for Boshernitzan's theorem, the result of Pawelec
\cite[Theorem~3.1]{Paw2017} and Theorem~\ref{thm:riorate} are
likewise formulated in terms of shrinking of the radii, however,
due to further assumptions on the invariant measure there exists
at least a partial connection between the radii and measures of
the targets in these cases.

It seems reasonable to expect that the dichotomy in
Theorem~\ref{thm:doublingdichotomy} holds also under the
assumptions of Theorem~\ref{thm:riorate}, but we are uncertain if
this is true. More generally, under sufficiently strong mixing
assumptions, one might expect that the divergence of the series
\[
\sum_{n=1}^{\infty} \int \mu (B(x,r_n)) \, \mathrm{d} \mu (x)
\]
implies that $\Rio$ has full $\mu$-measure.

The reason that we need the stronger assumption on $r_n$ in
Theorem~\ref{thm:riorate}, rather than the divergence of the
series above, is that the proof uses estimates on the correlation
of the sets $\{\, x : |x - T^n (x)| < r_n \,\}$.  Our estimates
on these correlations and the method of proof are not strong
enough to obtain $\mu (\Rio) = 1$ unless we impose extra
assumptions on the radii $r_n$.

\subsection{Structure of the paper}

We start by collecting several consequences of sufficiently fast
decay of correlation in Section \ref{sec:CorDecay}. These will
prove useful in the proofs of the main theorems on $\Rio$ in
Sections \ref{sec:proofA}--\ref{sec:proofC}. Finally, we consider
the set $\ear$ of eventually always returning points in Sections
\ref{sec:EARgeneral}--\ref{sec:ear}.

\section{Consequences of correlation decay}
\label{sec:CorDecay}

Throughout this section we set $X=[0,1]$ and $T\colon X\to X$. We
will deduce consequences from assumptions on the decay of
correlation for $L^1$ against $BV$. We start with the following
adaption of Lemma~3 in \cite{PerssonRams}. The statement is true
also for more general piecewise continuous functions $F$, but to
stay simple we formulate it for the kind of functions that we
will apply it to.

\begin{lemma} \label{lem:Decay1}
  Assume that $T\colon X\to X$ has summable decay $p(n)$ of
  correlations for $L^1$ against $BV$. Suppose that $F\colon
  [0,1]^2\to\R$ is the indicator function of an open or closed
  convex subset of\/ $[0,1]^2$.  Then
  \begin{equation*}
    \num{\int F(T^n x,x) \, \mathrm{d} \mu(x) - \iint F \,
      \mathrm{d} \mu \, \mathrm{d} \mu} \leq 3 p(n)
  \end{equation*}
\end{lemma}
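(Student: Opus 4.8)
The plan is to reduce the two-dimensional correlation estimate for $F(T^n x, x)$ to the one-dimensional decay-of-correlations hypothesis by approximating the indicator of a convex set by a suitable product-type function, or more precisely by slicing the convex set and handling each slice separately. First I would recall that a convex subset $C \subseteq [0,1]^2$ can be written, up to its boundary (which has $\mu \times \mu$-measure zero since $\mu$ is absolutely continuous), as $C = \{(u,v) : a(v) \le u \le b(v),\ v \in J\}$ for some interval $J$ and functions $a \le b$ that are respectively lower- and upper-semicontinuous, with $a$ convex and $b$ concave. Writing $F(u,v) = \een_{[a(v), b(v)]}(u) \een_J(v)$, I would express $\int F(T^n x, x)\,\mathrm d\mu(x)$ as $\int \een_J(x)\, g_x(T^n x)\,\mathrm d\mu(x)$ where $g_x = \een_{[a(x),b(x)]}$ depends on the second variable $x$ — so it is not literally in the form $\int f\circ T^n\, g\,\mathrm d\mu$, and this is the point that needs care.

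The key steps, in order: (1) For a \emph{fixed} interval $[a,b] \subseteq [0,1]$, apply the hypothesis with $f = \een_{[a,b]}$ and $g = \een_J$ (note $\lVert f\rVert_1 \le \mu([a,b]) \le 1$ and $\lVert \een_J\rVert_{BV} \le 3$, since $\var \een_J \le 2$ and $\sup \een_J \le 1$) to get $|\int \een_{[a,b]}(T^n x)\een_J(x)\,\mathrm d\mu - \mu([a,b])\mu(J)| \le 3 p(n)$. (2) Now I would discretize the dependence of $[a(x),b(x)]$ on $x$: partition $J$ into finitely many small subintervals and on each, replace $[a(x),b(x)]$ by a fixed interval (say its value at an endpoint, or a slightly enlarged/shrunk version using monotonicity-type properties of the convex/concave envelopes $a$, $b$), bounding the error by the $\mu\times\mu$-measure of a thin "collar" region near $\partial C$, which tends to $0$ as the partition is refined. (3) Apply step (1) on each piece, sum, and pass to the limit as the partition is refined; the finitely many error terms of size $3p(n)$ must not accumulate, so the discretization has to be arranged so that the estimate $3p(n)$ survives — this is where following the structure of Lemma~3 in \cite{PerssonRams} is essential, presumably by realizing $F(\cdot,x)$ as a monotone limit (in $x$-blocks) so that telescoping keeps a single $BV$-norm of size $3$ rather than a sum over blocks.

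I expect the main obstacle to be exactly this last point: a naive slicing produces a sum $\sum_i 3p(n)$ over the $N$ slices, which blows up. The resolution is to exploit convexity so that the slices nest monotonically and the relevant function, viewed appropriately, still has total variation bounded by a universal constant (here $2$, giving $BV$-norm $\le 3$) independently of the number of slices — essentially because $\een_C(u,v)$, as a function of $v$ for fixed $u$, is the indicator of a single interval (convexity of $C$ in the $v$-direction), so summing indicators of nested sets telescopes. A secondary, more routine obstacle is controlling the boundary/collar terms using absolute continuity of $\mu$ together with the upper and lower bounds $c^{-1} \le h \le c$ on the density, but since $\partial C$ has Lebesgue measure zero and $\mu \ll \mathrm{Leb}$, these errors vanish in the limit and contribute nothing to the final bound $3p(n)$.
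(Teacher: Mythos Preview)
You have correctly identified the central difficulty --- a naive discretization yields $N$ error terms each of size $3p(n)$ --- and you have spotted the relevant convexity fact, namely that for each fixed $u$ the slice $v \mapsto \een_C(u,v)$ is the indicator of a single interval and hence has BV norm at most $3$. However, your proposed resolution via ``telescoping of nested sets'' does not work as written: in your scheme the subintervals $J_i \subset J$ are disjoint, not nested, and the BV norm of each $\een_{J_i}$ equals $3$ regardless of how fine the partition is, while the $L^1$ norms $\lVert \een_{[a_i,b_i]}\rVert_1 = \mu([a_i,b_i])$ are the widths of horizontal slices of $C$ and do \emph{not} sum to $1$. So the errors genuinely accumulate and no telescoping rescues this.

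The fix --- and this is what the paper does, following Persson--Rams --- is to reverse the roles of the two coordinates in the discretization. Partition the \emph{first} coordinate (the one composed with $T^n$) into intervals $I_k$, and on each piece freeze the first argument, approximating $F(u,v)$ by $\sum_k \hat F(a_k, v)\,\een_{I_k}(u)$ for sample points $a_k \in I_k$ (after first replacing $F$ by a continuous approximant $\hat F$ so that this step-function converges uniformly). The decay hypothesis applied with $f = \een_{I_k}$ and $g = \hat F(a_k,\cdot)$ gives an error bounded by $\lVert \een_{I_k}\rVert_1 \,\lVert \hat F(a_k,\cdot)\rVert_{BV}\, p(n) \le \mu(I_k)\cdot 3\, p(n)$, using exactly the convexity fact you noted to get the uniform BV bound on the $v$-slices. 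Now summation over $k$ costs only $\sum_k \mu(I_k) = 1$, and the total error is $3p(n)$. In short: you partitioned the BV variable; partition the $L^1$ variable instead, so that the $L^1$ norms sum to $1$.
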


\begin{proof}
  Let $Y \subset [0,1)^2$ be the convex subset such that $F (x,y)
    = \mathbbm{1}_{Y} (x,y)$. Take $\epsilon>0$ and fix $n$. Let
    $\hat{F}$ be a continuous function such that
  \begin{equation*}
    \num{\int F(T^n x,x) \, \mathrm{d} \mu(x)-\int \hat{F}(T^n
      x,x) \, \mathrm{d} \mu(x)}<\epsilon
  \end{equation*}
  and
  \begin{equation*}
    \num{\iint F \, \mathrm{d} \mu \mathrm{d} \mu - \iint \hat{F} \,
      \mathrm{d} \mu \mathrm{d} \mu}<\epsilon.
  \end{equation*}  
  We may choose $\hat{F}$ such that for all $x\in[0,1]$
  the function $f_x \colon y \mapsto \hat{F} (x,y)$ satisfies
  $\textup{var} f_x \leq 2$ and $\sup |f_x| \leq 1$. Indeed, put
  \[
  \hat{F}_k (x, y) = \left\{ \begin{array}{ll} 1 & \text{if }
    (x,y) \in Y, \\ \max \{ 0, 1 - k d( (x,y), Y) \} & \text{if }
    (x,y) \not \in Y, \end{array} \right.
  \]
  if $Y$ is closed and
  \[
  \hat{F}_k (x, y) = \left\{ \begin{array}{ll} 0 & \text{if }
    (x,y) \not \in Y, \\ \min \{ 1, k d( (x,y), \complement Y) \}
    & \text{if } (x,y) \in Y, \end{array} \right.
  \]
  if $Y$ is open, so that $\hat{F}_k$ is a piecewise linear
  approximation of $F$.

  If $Y$ is closed, then $\hat{F}_k \geq F$ and $\hat{F}_k$
  converges pointwise to $F$. If $Y$ is open, then $\hat{F}_k
  \leq F$ and $\hat{F}_k$ converges pointwise to $F$.  It follows
  that the functions $x \mapsto \hat{F}_k (T^n x,x)$ converges
  pointwise to $x \mapsto F (T^n x,x)$, and the convergence is
  monotone. Hence, by the monotone convergence theorem, we may
  take $k$ so large that
  \[
  \num{\int \hat{F}_k (T^n x,x) \, \mathrm{d} \mu(x)-\int F
  (T^n x,x) \, \mathrm{d} \mu(x)} < \varepsilon
  \]
  and
  \begin{equation*}
  	\num{\iint F \, \mathrm{d} \mu \mathrm{d} \mu - \iint  \hat{F}_k \,
  		\mathrm{d} \mu \mathrm{d} \mu}<\epsilon.
  \end{equation*}
  Let $\hat{F} = \hat{F}_k$ for such a $k$.
  
  Let $I_k = \bigl[ a_k, a_{k+1} \bigr)$, $k=0,\dots,m-1$ be a
    partition of $X$. Set
  \begin{equation*}
    G(x,y)=\sum_{k=0}^{m-1} \hat{F} (a_k ,y) \een_{I_k}(x),
  \end{equation*}
  where $\een_{I_k}$ denotes the characteristic function on
  $I_k$. Since $\hat{F}$ is continuous we may choose a partition
  so that
  \begin{equation*}
    |\hat{F} (x,y)-G(x,y)|<\epsilon
  \end{equation*}
  and hence
  \begin{equation*}
    \num{\int \hat{F}(T^n x,x) \, \mathrm{d} \mu(x)-\int G(T^n
      x,x) \, \mathrm{d} \mu(x)}<\epsilon.
  \end{equation*}
  The second integral can be rewritten as
  \begin{align*}
    \int G(T^n x,x) \, \mathrm{d} \mu(x)&=\sum_{k=0}^{m-1} \int
    \hat{F} (a_k ,x ) \een_{I_k}(T^n x) \, \mathrm{d} \mu(x).
  \end{align*}
  For the integral on the right hand side we may rewrite
  \begin{align*}
    \biggl| \int \hat{F} (a_k, x) \een_{I_k}(T^n x) \, \mathrm{d}
    \mu(x) - &\int \hat{F} (a_k, x) \,\mathrm{d}
    \mu(x)\int\een_{I_k}(x)\,\mathrm{d} \mu(x) \biggr|
    \\ &\leq\norm{\een_{I_k}}_1 \lVert \hat{F} (a_k,x)
    \rVert_{BV}p(n)\\ &\leq \mu(I_k) 3 p(n).
    \end{align*}
  Now summing over $k$ on both sides and using that $\sum_k
  \mu(I_k)= 1$ we get
  \begin{align*}
    \biggl| \int G(T^n x,x)\,\mathrm{d} \mu(x)-\int
    \sum_{k=0}^{m-1} \hat{F} (a_k, x) \mu(I_k)\, \mathrm{d}
    \mu(x) \biggr| \leq 3 p(n)
  \end{align*}
  and
  \begin{align*}
    \biggl| \int \hat{F} (T^n x,x) \, \mathrm{d} \mu(x)-\int
    \sum_{k=0}^{m-1} \hat{F} (a_k, x) \mu(I_k) \, \mathrm{d}
    \mu(x) \biggr| \leq \epsilon + 3 p(n).
  \end{align*}
  Hence
  \begin{align*}
    \biggl| \int F (T^n x,x) \, \mathrm{d} \mu(x)-\int
    \sum_{k=0}^{m-1} \hat{F} (a_k, x) \mu(I_k) \, \mathrm{d}
    \mu(x) \biggr| \leq 2\epsilon + 3 p(n)
  \end{align*}
  and
  \begin{multline*}
    \biggl| \int F (T^n x,x) \, \mathrm{d} \mu(x) - \iint F \,
    \mathrm{d} \mu \mathrm{d} \mu \biggr| \leq \biggl| \iint F \,
    \mathrm{d} \mu \mathrm{d} \mu - \iint \hat{F} \, \mathrm{d}
    \mu \mathrm{d} \mu \biggr| + \\ + \biggl| \iint \hat{F} \,
    \mathrm{d} \mu \mathrm{d} \mu -\int \sum_{k=0}^{m-1} \hat{F}
    (a_k, x) \mu(I_k) \, \mathrm{d} \mu(x) \biggr| + 2\epsilon +
    3 p(n).
  \end{multline*}
    
  Finally, by letting $\epsilon\to 0$ (i.e. $m\to\infty$), the
  sum converges to the integral $\iint \hat{F} \, \mathrm{d} \mu
  \mathrm{d} \mu$ and we get
  \[
  \biggl| \int F(T^n x,x)\, \mathrm{d} \mu(x)-\iint F (y,x) \,
  \mathrm{d} \mu(y) \mathrm{d} \mu(x) \biggr| \leq 3
  p(n). \qedhere
  \]
\end{proof}

In order to find correlation estimates in
Section~\ref{sec:proofA}, we need a lemma similar to
Lemma~\ref{lem:Decay1}, but for functions of three
variables. This is provided by the following lemma (with $M_n=\mu(B(x,r_n(x)))$ for every $n \in \mathbbm{N}$ as before).

\begin{lemma} \label{lem:3variables}
  Suppose that $(X,T,\mu)$ has exponential decay of correlations
  for $L^1$ against $BV$ (with $p(n)=Ce^{-\tau n}$ in Definition \ref{defn:decay}) and that there are constants $c$ and $s$
  such that
  \[
  \mu (B(x,r)) \leq c r^s
  \]
  holds for any ball $B(x,r)$.
  
  There is a constant $D$ such that for all $m,n \in
  \mathbbm{N}$, and for $F$ defined by
  \[
  F (x,y,z) = \left\{ \begin{array}{ll} 1 & \text{if } x \in B(z,
    r_{n+m} (z)) \text{ and } y \in B(z, r_n(z)), \\ 0 &
    \text{otherwise,} \end{array} \right.
  \]
  we have
  \begin{multline*}
    \int F(T^{n+m} (x), T^n (x), x) \, \mathrm{d} \mu (x) \leq (
    1 + 3C e^{-\frac{\tau}{2} n}) \iiint F \, \mathrm{d} \mu \,
    \mathrm{d} \mu \, \mathrm{d} \mu \\ + D (M_n e^{- \frac{s
        \tau}{2} n} + M_{m+n} (e^{- \frac{s \tau}{2} n} +
    e^{-\tau m}) + e^{- \frac{s \tau}{2} n}).
  \end{multline*}
\end{lemma}

\begin{proof}

Consider the integral $\int F(T^{n+m} x, T^n x, x) \, \mathrm{d}
\mu (x)$. We first approximate $F$ by partitioning $[0,1]$ into
$e^{\tau n /2}$ subintervals of equal length. The indicator
function of the $k$-th interval is $G_k$. Let $z_k$ be the mid
point of the $k$-th interval.

We write
\[
F (x,y,z) \leq \tilde{F} (x,y,z) := \sum_k F_k (x,y) G_k (z),
\]
where $F_k$ is chosen such that the above inequality is true, and
making the approximation close to as good as possible. More
precisely, we choose $F_k$ to be the indicator function of a
rectangle $\tilde{A}_{z_k}$, defined as follows.

Consider for fixed $z$ the sets $A_z = \{\, (x,y) : F (x,y,z) = 1
\, \}$, which is a rectangle of size $2r_{n+m} \times 2r_n$. By
expanding $A_{z_k}$ to a rectangle of size $2 (r_{n+m} + e^{-\tau
  n/2}) \times 2 (r_n + e^{-\tau n/2})$ and with the same centre,
we obtain the rectangle $\tilde{A}_{z_k} = I_k \times J_k$. By
the construction, we have that $F \leq \tilde{F}$.

Since $F_k$ is the indicator function of a rectangle
$\tilde{A}_{z_k} = I_k \times J_k$, we get by decay of
correlations that
\begin{align}
  \nonumber
  \int F_k (T^m x,x) \, \mathrm{d} \mu (x) & = \int
  \mathbbm{1}_{I_k} (T^m x) \mathbbm{1}_{J_k} (x) \, \mathrm{d}
  \mu (x) \\ & \leq \mu (I_k) (\mu (J_k) + 3 C e^{-\tau m}).
   \label{eq:Fkestimate}
\end{align}
and we get by the assumption $\mu (B(x,r)) \leq c r^s$ that
\[
\mu (I_k) \leq M_{n+m} + 2 c e^{-\frac{\tau s}{2} n} \qquad
\text{and} \qquad \mu (J_k) \leq M_n + 2 c e^{-\frac{\tau s}{2}
  n},
\]
which combined with \eqref{eq:Fkestimate}, implies that
\begin{equation} \label{eq:Fkestimate2}
  \int F_k (T^m x,x) \, \mathrm{d} \mu (x) \leq K_{m,n}
\end{equation}
where
\[
  K_{m,n} := (M_{n+m} + 2 c e^{-\frac{\tau s}{2} n}) (M_n + 2 c
  e^{-\frac{\tau s}{2} n} + 3C e^{-\tau m}).
\]

Using decay of correlations and \eqref{eq:Fkestimate2}, we now get
\begin{align*}
  \int &F(T^{n+m} x, T^n x, x) \, \mathrm{d} \mu (x) \\ & \leq
  \sum_k \int F_k(T^{n+m} x, T^n x) G_k (x) \, \mathrm{d} \mu (x)
  \\ & \leq \sum_k \int F_k(T^{m} x, x) \, \mathrm{d} \mu (x)
  \biggl( \int G_k \mathrm{d} \mu + 3 C e^{-\tau n} \biggr) \\ &
  \leq \sum_k K_{m,n} \biggl( \int G_k \mathrm{d} \mu + 3 C
  e^{-\tau n} \biggr) \\ &\leq K_{m,n} \biggl(1 + \sum_k
  3 C e^{-\tau n} \biggr),
\end{align*}
where in the last step, we used that $\sum_k \int G_k \,
\mathrm{d} \mu = 1$. Using that the sum over $k$ has $e^{\tau n /
  2}$ terms, we get that

\begin{align*}
  & \int F(T^{n+m} x, T^n x, x) \, \mathrm{d} \mu (x) \\ \leq & 
  (M_{n+m} + 2 c e^{-\frac{\tau s}{2} n}) (M_n + 2 c
  e^{-\frac{\tau s}{2} n} + 3C e^{-\tau m}) (1 + 3C
  e^{-\frac{\tau}{2} n}) \\ \leq &  M_n M_{n+m} ( 1 + 3C
  e^{-\frac{\tau}{2} n}) + D (M_n e^{- \frac{s \tau}{2} n} +
  M_{m+n} (e^{- \frac{s \tau}{2} n} + e^{-\tau m}) + e^{- \frac{s
      \tau}{2} n}).
\end{align*}
Since $\iiint F \, \mathrm{d}\mu\mathrm{d}\mu\mathrm{d}\mu = M_n
M_{m+n}$, the lemma follows.
\end{proof}

\section{Proof of Theorems~\ref{thm:riorate} and \ref{thm:riorate2}} \label{sec:proofA}
\subsection{Correlation estimates}

We are going to first prove Theorem~\ref{thm:riorate2} and then
conclude Theorem~\ref{thm:riorate} from
Theorem~\ref{thm:riorate2}.  To prove Theorem~\ref{thm:riorate2},
we suppose that a sequence $(M_n)_{n=1}^\infty$ is given, and we
define $r_n (x)$ such that $\mu (B(x,r_n(x))) = M_n$.

Let $\hat{E}_n = \{\, x : T^n (x) \in B(x,r_n(x)) \,\}$. Here, we
state and prove some estimates on the measure of $\hat{E}_n$ that
will be needed in the proof of Theorem~\ref{thm:riorate2}.

\begin{lemma} \label{lem:measureofEn}
  There is a constant $C>0$ such that
  \[
  M_n - C e^{-\tau n} \leq \mu (\hat{E}_n) \leq M_n + C e^{-\tau
    n}.
  \]
\end{lemma}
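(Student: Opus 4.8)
The plan is to write $\mu(E_n)$ as a self-correlation integral and then feed it into the decay estimates of the previous section. Setting $F_n(u,v) = \mathbbm{1}_{\{|u-v| < r_n\}}$, we have
\[
\mu(E_n) = \int \mathbbm{1}_{E_n}(x) \, \mathrm{d}\mu(x) = \int F_n\bigl(T^n(x), x\bigr) \, \mathrm{d}\mu(x).
\]
For each fixed $u$, the slice $v \mapsto F_n(u,v)$ is the indicator of an interval, hence of bounded variation with $\lVert F_n(u,\cdot) \rVert_{BV} \leq 3$, a bound uniform in $u$ and $n$; equivalently $F_n$ is the indicator of an open convex subset of $[0,1]^2$. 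Thus Lemma~\ref{lem:2variables} applies (its standing hypotheses hold since the ergodic absolutely continuous invariant measure of a piecewise $C^2$ expanding map has density of bounded variation), and, because the relevant $BV$-norms are bounded independently of $n$, the constant it produces can be taken independent of $n$: there is $C>0$ with
\[
\biggl| \mu(E_n) - \iint F_n \, \mathrm{d}\mu \, \mathrm{d}\mu \biggr| \leq C e^{-\tau n}.
\]
Alternatively one could invoke Lemma~\ref{lem:Decay1}, using that such maps have exponentially decaying correlations for $L^1$ against $BV$, which gives the bound $3p(n) \leq C e^{-\tau n}$ directly.

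It then remains to sandwich the decoupled double integral between $c^{-1} r_n$ and $2c r_n$. By Fubini,
\[
\iint F_n \, \mathrm{d}\mu \, \mathrm{d}\mu = \int \mu\bigl(B(y,r_n)\bigr) \, \mathrm{d}\mu(y),
\]
and writing $\mu(B(y,r_n)) = \int_{(y-r_n,\,y+r_n)\cap[0,1]} h(x)\,\mathrm{d}x$ and using $c^{-1}\leq h\leq c$, the upper bound $\mu(B(y,r_n)) \leq 2c r_n$ is immediate (the interval has Lebesgue measure at most $2r_n$), while — assuming $r_n\leq 1$, which is the only case of interest — the truncated interval $(y-r_n,y+r_n)\cap[0,1]$ still has length at least $r_n$, so $\mu(B(y,r_n)) \geq c^{-1} r_n$. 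Integrating these two inequalities against $\mathrm{d}\mu(y)$ and using $\mu([0,1])=1$ yields $c^{-1} r_n \leq \iint F_n \, \mathrm{d}\mu \, \mathrm{d}\mu \leq 2 c r_n$, and combining this with the correlation estimate above gives precisely the asserted two-sided bound.

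I do not expect a genuine obstacle here: the content is just repackaging $\mu(E_n)$ so that Lemma~\ref{lem:2variables} applies, followed by an elementary density computation. The two points worth a line of care are (i) checking that the constant in the correlation lemma is independent of $n$, which holds exactly because the $BV$-norms of the one-dimensional slices of $F_n$ are bounded by an absolute constant regardless of $n$; and (ii) the boundary truncation in the lower estimate of $\mu(B(y,r_n))$, where keeping only "length $\geq r_n$" (rather than $2r_n$, which would fail near $y=0$ or $y=1$) is what produces the factor $c^{-1}$, and not $2c^{-1}$, in the stated lower bound.
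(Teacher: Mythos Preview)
Your proof is correct and follows essentially the same route as the paper: write $\mu(E_n)$ as $\int F_n(T^n x,x)\,\mathrm{d}\mu$, apply Lemma~\ref{lem:2variables} to decouple, and then bound $\iint F_n\,\mathrm{d}\mu\,\mathrm{d}\mu$ using $c^{-1}\le h\le c$. Your treatment is in fact slightly more detailed than the paper's, which simply asserts the bounds $c^{-1}r_n\le\iint F_n\,\mathrm{d}\mu\,\mathrm{d}\mu\le 2cr_n$ without spelling out the boundary-truncation reason for the asymmetric lower constant.
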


\begin{proof}
  Put
  \[
  F_n (x,y) = \left\{ \begin{array}{ll} 1 & \text{if } x \in B(y,
    r_n(y)), \\ 0 & \text{otherwise}. \end{array} \right.
  \]
  Then
  \[
  \mu (\hat{E}_n) = \int F_n (T^n (x), x) \, \mathrm{d} \mu (x).
  \]
  Consequently, we have by Lemma~\ref{lem:Decay1} that
  \[
  \iint F_n \, \mathrm{d} \mu \mathrm{d} \mu - C e^{-\tau n} \leq
  \mu (\hat{E}_n) \leq \iint F_n \, \mathrm{d} \mu \mathrm{d} \mu
  + C e^{-\tau n}
  \]
  By the definition of $F_n$ and by Fubini's theorem, we have
  \[
  \iint F_n \, \mathrm{d} \mu \mathrm{d} \mu = M_n,
  \]
  which proves the lemma.
\end{proof}

The following lemma is a direct consequence of
Lemma~\ref{lem:3variables}.

\begin{lemma} \label{lem:correlations}
  We have the following correlation estimate.
  \begin{multline}
    \mu (\hat{E}_{n} \cap \hat{E}_{n+m}) \leq (1 +3C e^{-\frac{
        \tau}{2} n} ) M_n M_{n+m} \\ + D (M_n e^{- \frac{s
        \tau}{2} n} + M_{m+n} (e^{- \frac{s \tau}{2} n} +
    e^{-\tau m}) + e^{- \frac{s \tau}{2} n}).
    \label{eq:correlation2}
  \end{multline}
\end{lemma}

We will use the correlation estimate from
Lemma~\ref{lem:correlations} to apply the following inequality by
Chung and Erd\H{o}s.

\begin{lemma}[The Chung--Erd\H{o}s inequality {\cite[Lemma]{ChungErdos}}] \label{lem:chungerdos}
  For measurable sets $A_1, \ldots, A_n$ holds
  \begin{equation}
    \mu (A_1 \cup \ldots \cup A_n) \geq \frac{ \Bigl(
      \sum_{j=1}^n \mu (A_j) \Bigr)^2 }{ \sum_{j,k = 1}^n \mu
      (A_j \cap A_k) }.
  \end{equation}
\end{lemma}

\subsection{Proof of Theorems~\ref{thm:riorate} and \ref{thm:riorate2}}

In this subsection we first give the proof of
Theorem~\ref{thm:riorate2} and later we conclude
Theorem~\ref{thm:riorate}. The proof is based on the
Chung--Erd\H{o}s inequality, Lemma~\ref{lem:chungerdos}.

\subsubsection{Proof of Theorem~\ref{thm:riorate2}}

We let
\[
U_N = \bigcup_{j \in I_N} \hat{E}_j
\]
where
\[
I_N = \{\, j : \frac{2}{\tau s} \log N \leq j \leq N \, \}.
\]

We note that $U_N$ is defined so that $\newRio = \limsup_{N\to
  \infty} U_N$, and to prove that $\newRio$ has large measure, we
will prove that $U_N$ has large measure.  In the union which
defines $U_N$ we consider only set $\hat{E}_j$ with $j \in
I_N$. By introducing the set $I_N$, we get better correlation
control. This has the effect that we need to require that
\[
\lim_{N \to \infty} \sum_{n = c \log N}^N M_n = \infty
\]
in order to prove that $U_N$ has large measure, which is the
reason that we cannot only assume that $\sum M_n$ is divergent.

Let
\[
S_N = \sum_{j \in I_N} \mu (\hat{E}_j) \qquad \text{and} \qquad
\sigma_N = \sum_{j \in I_N} M_j.
\]
By Lemma~\ref{lem:measureofEn}
\[
\sum_{j \in I_N} (M_j - C e^{- \tau j}) \leq S_N \leq \sum_{j \in
  I_N} (M_j + C e^{- \tau j}),
\]
so that
\[
\sigma_N - c_1 \leq S_N \leq \sigma_N + c_1,
\]
for some constant $c_1$.

We let
\[
C_N = \sum_{j,k \in I_N} \mu (\hat{E}_j \cap \hat{E}_k),
\]
and by Lemma~\ref{lem:correlations} we have that
\begin{align*}
  C_N &= S_N + 2 \sum_{\substack{j,k \in I_N \\ j > k}} \mu (\hat{E}_j
  \cap \hat{E}_k) \\ &\leq S_N + 2 \sum_{\substack{j,k \in I_N \\ j >
      k}} (1 + 3Ce^{-\frac{\tau}{2} k}) M_j M_k + R_N \\ & \leq S_N +
  (1 + 3CN^{-1/s}) \sigma_N^2 + R_N,
\end{align*}
where
\[
  R_N = 2 D \sum_{\substack{j,k \in I_N \\ j > k}} (M_k e^{-
    \frac{s \tau}{2} k} + M_{j} (e^{- \frac{s \tau}{2} k} +
  e^{-\tau (j-k)}) + e^{- \frac{s \tau}{2} k}) \bigr).
\]
We will prove that $R_N$ is bounded, and split $R_N$ into four
sums in a natural way.

For the first sum, we have
\[
\sum_{\substack{j,k \in I_N \\ j > k}} M_k e^{- \frac{s \tau}{2} k}
\leq \sum_{j = \frac{2}{s \tau} \log N}^N \sum_{k = \frac{2}{s \tau}
  \log N}^{j-1} e^{-\frac{s \tau}{2} k} \leq \sum_{j = \frac{2}{s
    \tau} \log N}^N c_2 e^{- \log N} \leq c_2.
\]
The same kind of estimate works for the second sum as well and
yields
\[
\sum_{\substack{j,k \in I_N \\ j > k}} M_j e^{- \frac{s \tau}{2} k}
\leq \sum_{j = \frac{2}{s \tau} \log N}^N \sum_{k = \frac{2}{s \tau}
  \log N}^{j-1} e^{-\frac{s \tau}{2} k} \leq \sum_{j = \frac{2}{s
    \tau} \log N}^N c_2 e^{- \log N} \leq c_2.
\]

For the third sum, we have
\begin{align*}
\sum_{\substack{j,k \in I_N \\ j > k}} M_j e^{-\tau (j - k)} & = \sum_{j
  = \frac{2}{s \tau} \log N}^N M_j \sum_{k = \frac{2}{s \tau} \log
  N}^{j-1} e^{-\tau (j - k)} \\
& \leq \sum_{j = \frac{2}{s \tau} \log N}^N
c_3 M_j = c_3 \sigma_N.
\end{align*}

Finally, the fourth sum is estimated as the first two by
\[
\sum_{\substack{j,k \in I_N \\ j > k}} e^{-\frac{s \tau}{2} k} \leq
c_2.
\]
Hence we have $R_N \leq 2D (3 c_2 + c_3 \sigma_N)$ and
\begin{align*}
  C_N & \leq S_N + (1 + 3CN^{-1/s}) \sigma_N^2 + 2D (3 c_2 + c_3 \sigma_N)
  \\ & \leq \sigma_N + c_1 + (1 + 3CN^{-1/s}) \sigma_N^2 + 2D (3 c_2 + c_3
  \sigma_N) \\
  & = (1 + 3CN^{-1/s}) \sigma_N^2 + c_4 (\sigma_N + 1).
\end{align*}

We now use the Chung--Erd\H{o}s inequality and conclude that
\[
\mu (U_N) \geq \frac{S_N^2}{C_N} \geq \frac{(\sigma_N - c_1)^2}{(1 +
  3CN^{-1/s}) \sigma_N^2 + c_4 (\sigma_N + 1)}.
\]
Since $\limsup_{N \to \infty} \sigma_N = \infty$ we conclude that
\[
\limsup_{N \to \infty} \mu(U_N) \geq 1,
\]
and hence that $\limsup_{N \to \infty} \mu (U_N) = 1$.  It follows
that $\mu (\limsup U_N) = 1$ and since $\limsup U_N = \limsup
\hat{E}_j$ we have proved that $\mu (\newRio) = 1$. This proves
Theorem~\ref{thm:riorate2}.

\subsubsection{Proof of Theorem~\ref{thm:riorate}}

We now conclude Theorem~\ref{thm:riorate} from
Theorem~\ref{thm:riorate2}. Since $\mu$ has density in $L^q$ for $q >
1$, it follows by H\"{o}lder's inequality that we may take $s = 1 -
\frac{1}{q}$. Hence the assumptions of Theorem~\ref{thm:riorate2} are
satisfied and we may conclude that for any sequence $M_n$ which
satisfies \eqref{eq:sequencecondition2} and for almost every $x$, we
have $T^n (x) \in B(x,r_n(x))$ for infinitely many $n$, where $r_n$ is
such that $\mu (B(x,r_n)) =M_n$. Since $\mu$ is absolutely continuous
with respect to Lebesgue measure with a density which is bounded away
from zero, this immediately implies that for almost all $x$ we have $d
(x, T^n(x)) < r_n$ for infinitely many $n$, provided the sequence
$r_n$ satisfies \eqref{eq:sequencecondition}.  This proves
Theorem~\ref{thm:riorate}.

\section{Proof of Theorem~\ref{thm:RioMeasZero}} \label{sec:RioMeasZero}

Given a sequence $(r_n)_{n=1}^\infty$ of non-negative numbers, we
will use the notation
\begin{align*}
  E_n:=\{\, x\in X:T^n(x)\in B(x,r_n) \,\}
\end{align*}
so that
\begin{align}
  \label{Rio}
  \Rio &=\bigcap_{k=1}^{\infty}\bigcup_{n=k}^{\infty} E_n .
\end{align}
Note that from \eqref{Rio} we get
\begin{align*}
  \mu(\Rio)\leq \lim_{k\to\infty}\sum_{n=k}^{\infty}\mu(E_n).
\end{align*}
Hence, if for fixed $k$ the sum converges we get $\mu(\Rio)=0$,
so we are interested in estimating the measure $\mu(E_n)$. For
that purpose, we will apply Lemma~\ref{lem:Decay1} on the
function $F_n$ defined by
\begin{equation*}
F_n(x,y)=\begin{cases}
1 & \text{if } x\in B(y,r_n),\\
0 & \text{otherwise.}
\end{cases}
\end{equation*}
This allows us to write 
\begin{align}\label{Enmeasure}
  \mu(E_n) = \int \een_{{E_n}} \mathrm{d}\mu &=\int F_n(T^n
  x,x)\, \mathrm{d}\mu(x).
\end{align}
By Lemma \ref{lem:Decay1} we then have
\begin{align}
  \num{\mu(E_n)-\iint F_n(y,x)\, \mathrm{d} \mu(y) \mathrm{d}
    \mu(x)}\leq 3p(n)
\end{align}
which turns into
\begin{align}\label{inftyeq}
  \num{\mu(E_n)-\int \mu(B(x,r_n)) \,\mathrm{d} \mu(x)}\leq
  3p(n)
\end{align}
and finally
\begin{align}
  \mu(E_n)\leq \int \mu(B(x,r_n)) \,\mathrm{d} \mu(x)+3p(n)
\end{align}
The term $p(n)$ is summable by assumption, hence we see that if
\begin{align*}
  \sum_{n=1}^{\infty}\int \mu(B(x,r_n)) \, \mathrm{d} \mu (x) <
  \infty
\end{align*}
then $\sum \mu(E_n)<\infty$ and consequently $\mu(\Rio)=0$. This
proves Theorem~\ref{thm:RioMeasZero}.

\section{Proof of Theorem~\ref{thm:doublingdichotomy}} \label{sec:proofC}

\subsection{The one dimensional case}

In this section we prove Theorem~\ref{thm:doublingdichotomy} in
the one dimensional case when $T\colon [0,1]\to [0,1]$ and $Tx=a
x\mod 1$, where $a$ is an integer with $|a| > 1$. We do this
since the proof in this case is simpler. In
Section~\ref{sec:higherdim} we give the proof of the higher
dimensional case. The higher dimensional case is similar to the
one dimensional case, but has some extra complications that are
not present in the one dimensional case.

We let $\mu$ denote the Lebesgue measure on $X=[0,1]$, which is a
$T$ invariant measure. In this case $\mu(B(x,r_n))=2r_n$. Note
that, in contrast to the general case of
Section~\ref{sec:RioMeasZero}, the right hand side is independent
of $x$.

The proof of the theorem will rely on an application of the
following lemma with $H = 1$. (See for instance Harman
\cite[Lemma~2.3]{Harman}, or conclude it yourself from the
Chung--Erd\H{o}s inequality.) The special case with $H = 1$ is
the Erd\H{o}s--Renyi formulation of the Borel--Cantelli lemma
\cite{ErdosRenyi}.

\begin{lemma} \label{lem:harman}
  Let $H > 0$. If $A_j$ are sets such that
  \begin{equation}\label{eq:divergentsum}
    \sum_{n=1}^{\infty} \mu(A_n)=\infty
  \end{equation}
  and
  \begin{equation} \label{eq:harman}
    \liminf_{N \to \infty} \frac{ \displaystyle \sum_{1 \leq i <
        j \leq N} \bigl( \mu (A_i \cap A_j) - H \mu (A_i) \mu
      (A_j) \bigr)}{\displaystyle \Biggl( \sum_{i=1}^N \mu (A_i)
      \Biggr)^2} \leq 0,
  \end{equation}
  then $\mu (\limsup A_j) \geq \frac{1}{H}$.
\end{lemma}

The strategy is to rewrite the quantity in the numerator of
\eqref{eq:harman} using Fourier series. The following two lemmas
will be helpful.

\begin{lemma} \label{lem:fouriercoeff}
  Let $f$ be a function of bounded variation on $[0,1]$. Let
  $f\sim \sum_{n\in\Z} c_n e^{2\pi i n x}$ be the Fourier series
  of the 1-periodic extension of $f$ to $\R$. Then $\num{c_n}
  \leq \frac{\var f}{2 \pi \num{n}}$ for any $n\neq 0$.
\end{lemma}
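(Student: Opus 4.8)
The plan is to prove the standard bound on Fourier coefficients of a function of bounded variation via integration by parts (in the Riemann--Stieltjes sense), treating $f$ as the distribution function of a signed measure $\mathrm{d}f$ of total variation $\var f$.

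First I would recall that a function of bounded variation on $[0,1]$ can be written as the difference of two non-decreasing functions, hence induces a signed Borel measure $\mathrm{d}f$ on $[0,1)$ with total variation $\lVert \mathrm{d}f \rVert = \var f$. For $n \neq 0$ the coefficient is
\[
c_n = \int_0^1 f(x) e^{-2\pi i n x} \, \mathrm{d}x.
\]
Next I would integrate by parts. Writing $e^{-2\pi i n x} = \frac{\mathrm{d}}{\mathrm{d}x}\bigl( \frac{e^{-2\pi i n x}}{-2\pi i n} \bigr)$, the Riemann--Stieltjes integration by parts formula gives
\[
c_n = \Bigl[ f(x) \cdot \frac{e^{-2\pi i n x}}{-2\pi i n} \Bigr]_0^1 - \int_0^1 \frac{e^{-2\pi i n x}}{-2\pi i n} \, \mathrm{d}f(x).
\]
Because the $1$-periodic extension of $f$ is being used (and $e^{-2\pi i n x}$ is $1$-periodic), the boundary term involving the values of $f$ at the endpoints telescopes: more carefully, one uses $f(1^-) = f(0^+)$ for the periodic extension, or absorbs any jump of $f$ at the integer points into $\mathrm{d}f$; in either case the boundary contribution vanishes. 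Hence $c_n = \frac{1}{2\pi i n}\int_0^1 e^{-2\pi i n x} \, \mathrm{d}f(x)$.

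Finally I would estimate trivially: $|c_n| \leq \frac{1}{2\pi |n|} \int_0^1 |e^{-2\pi i n x}| \, |\mathrm{d}f|(x) = \frac{1}{2\pi |n|}\lVert \mathrm{d}f \rVert = \frac{\var f}{2\pi |n|}$, which is the claimed inequality. The only delicate point — the ``main obstacle'' — is handling the boundary term correctly, i.e.\ making precise that the periodic extension removes it and that a jump of $f$ at the endpoint is legitimately counted inside $\var f$ on $[0,1]$; this is a standard but easy-to-botch bookkeeping matter, and can alternatively be sidestepped entirely by approximating $f$ by smooth functions converging in $L^1$ with variation bounded by $\var f$ (e.g.\ mollification), proving the bound for smooth $f$ by ordinary integration by parts, and passing to the limit.
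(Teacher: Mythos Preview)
Your proof is correct and follows essentially the same approach as the paper: Stieltjes integration by parts followed by the trivial bound $\bigl|\int e^{-2\pi i n x}\,\mathrm{d}f\bigr|\leq \var f$. You are in fact more careful than the paper about the boundary term, which the paper suppresses entirely; the paper also cites Taibleson for an elementary alternative, whereas you sketch a mollification argument.
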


\begin{proof}
  Using Stieltjes integration, and integration by parts, we may write
  \begin{align*}
    c_n &= \int_0^1 e^{-i2\pi n x} f(x) \, \mathrm{d}x = \int_0^1
    \frac{-1}{i 2 \pi n} f(x) \, \mathrm{d} (e^{-i2\pi n x})
    \\ &= \int_0^1 \frac{1}{i 2 \pi n} e^{-i2 \pi nx} \,
    \mathrm{d}f (x).
  \end{align*}
  Hence $|c_n| \leq \frac{1}{2 \pi |n|} \var f$.
  
  For an elementary proof not using Stieltjes integrals, see Taibleson
  \cite{Taibleson}.
\end{proof}

\begin{lemma}\label{gcdlemma}
  Let $a,m,n\in\N$. Then $\gcd(a^m-1,a^n-1)=a^{\gcd(m,n)}-1$.
\end{lemma}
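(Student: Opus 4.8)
The plan is to reduce the identity to the Euclidean algorithm carried out on the exponents. First I would isolate two elementary facts. The first is that $k \mid \ell$ implies $a^{k}-1 \mid a^{\ell}-1$: writing $\ell = qk$ one has the factorization $a^{\ell}-1 = (a^{k}-1)\bigl(1 + a^{k} + a^{2k} + \dots + a^{(q-1)k}\bigr)$. The second is the division-with-remainder identity: for $m > n$,
\[
a^{m}-1 = a^{m-n}\,(a^{n}-1) + (a^{m-n}-1),
\]
which shows that any common divisor of $a^{m}-1$ and $a^{n}-1$ also divides $a^{m-n}-1$, and conversely any common divisor of $a^{m-n}-1$ and $a^{n}-1$ divides $a^{m}-1$, so that
\[
\gcd(a^{m}-1,\,a^{n}-1) = \gcd(a^{m-n}-1,\,a^{n}-1).
\]

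With these in hand I would prove the lemma by induction on $m+n$, using that the statement is symmetric in $m$ and $n$ so that we may assume $m \geq n$. If $m = n$, both sides equal $a^{m}-1$, since $\gcd(m,m)=m$. If $m > n$, the identity above gives $\gcd(a^{m}-1,a^{n}-1) = \gcd(a^{m-n}-1,a^{n}-1)$, and since $(m-n)+n = m < m+n$ the induction hypothesis applies and evaluates the right-hand side to $a^{\gcd(m-n,\,n)}-1$. Finally $\gcd(m-n,n) = \gcd(m,n)$ by the corresponding step of the subtractive Euclidean algorithm, which closes the induction. (If $a = 1$ everything in sight is $0$ and there is nothing to prove, so one may as well assume $a \geq 2$.)

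There is no genuine obstacle here; the only points requiring a little care are the verification of the remainder identity and anchoring the induction correctly at $m = n$. I would also note an alternative, equally short argument through the multiplicative order of $a$: if $d = \gcd(a^{m}-1,a^{n}-1)$, then $a^{m}\equiv a^{n}\equiv 1 \pmod d$, so the order of $a$ modulo $d$ divides both $m$ and $n$, hence divides $\gcd(m,n)$, whence $d \mid a^{\gcd(m,n)}-1$; the reverse divisibility is exactly the first elementary fact above, and the two inequalities of divisibility together force equality.
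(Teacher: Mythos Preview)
Your proof is correct. The first half --- showing $a^{\gcd(m,n)}-1$ divides both $a^m-1$ and $a^n-1$ via the factorization $a^{qk}-1 = (a^k-1)(1+a^k+\dots+a^{(q-1)k})$ --- is exactly what the paper does. For the reverse divisibility, however, the paper takes a different route: it invokes B\'ezout's identity to write $\gcd(m,n) = um+vn$ with (say) $u>0$, $v<0$, observes that $d=\gcd(a^m-1,a^n-1)$ divides both $a^{um}-1$ and $a^{-vn}-1$, and then checks the algebraic identity $a^{um}-1 - a^{\gcd(m,n)}(a^{-vn}-1) = a^{\gcd(m,n)}-1$ to conclude $d \mid a^{\gcd(m,n)}-1$.

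Your argument instead runs the subtractive Euclidean algorithm on the exponents, using the remainder identity $a^m-1 = a^{m-n}(a^n-1)+(a^{m-n}-1)$ and inducting on $m+n$. This is arguably cleaner: it avoids the case analysis on the signs of the B\'ezout coefficients and makes the parallel with $\gcd(m,n)=\gcd(m-n,n)$ transparent. Your alternative via the multiplicative order of $a$ modulo $d$ is shorter still and is not mentioned in the paper. All three arguments are standard; the paper's has the minor advantage of being non-inductive, while yours more directly explains \emph{why} the identity mirrors the Euclidean algorithm.
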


\begin{proof}
 Set $d=\gcd(a^m-1,a^n-1)$ and $k=\gcd(m,n)$. Hence the claim is
 that $d=a^k-1$. We will prove this by first showing that
 $a^k-1\mid d$ and afterwards that $d\mid a^k-1$.
 
 Since $k=\gcd(m,n)$ we have $k\mid m$ and $k\mid n$. Say $m=ks$,
 $n=kl$ for some $s,l\in \Z$. This means that we may write
 \begin{align*}
   a^m-1&=a^{ks}-1=\bigl(a^k
   \bigr)^s-(1)^s\\ a^n-1&=a^{kl}-1=\bigl(a^k \bigr)^l-(1)^l.
 \end{align*}
 We recall the general identity for $p,q,r\in\N$,
 \begin{align} \label{al:nt}
   (p^r-q^r)=(p-q)(p^{r-1}+p^{r-2}q+p^{r-3}q^2+\ldots +
   pq^{r-2}+q^{r-1})
 \end{align}
 which can be verified simply by multiplying the
 brackets. Applying this identity we get that
 \begin{align*}
   \bigl( a^k \bigr)^s-(1)^s &= \bigl( a^k-1 \bigr)
   \bigl(a^{k(s-1)}+a^{k(s-2)}+\ldots+a^k+1 \bigr)\\ \bigl( a^k
   \bigr)^l-(1)^l &= \bigl( a^k-1 \bigr) \bigl(
   a^{k(l-1)}+a^{k(l-2)}+\ldots+a^k+1 \bigr).
 \end{align*}
 Since all quantities in these two equations are integers, we can
 conclude that $a^k-1\mid a^n-1$ and $a^k-1\mid a^m-1$. Hence,
 $a^k-1\mid \gcd(a^m-1,a^n-1)=d$.
 
 To show $d\mid a^k-1$ we apply B\'ezout's Lemma on $k=\gcd(m,n)$
 which gives us $u,v \in \Z$ such that $um+vn=k$. On the one
 hand, we note that $u$ and $v$ cannot be both positive because
 then $k$ would be larger than $m$ and $n$. On the other hand,
 $u$ and $v$ cannot be both negative because then $k$ would be
 negative as well. Without loss of generality, we let $u>0$ and
 $v\leq 0$. Notice that if $v=0$, then $um=k$ which implies $u=1$
 and $k=m$. Clearly, $d=\gcd(a^m-1,a^n-1)$ divides $a^k-1$ in
 this case. So we examine the remaining situation $u>0$ and $v<
 0$. Then we use the identity (\ref{al:nt}) again to see that
 $d=\gcd(a^m-1,a^n-1)$ divides $a^{um}-1$ as well as
 $a^{-vn}-1$. Hence, $d$ divides $a^{um} - 1 - a^k (a^{-vn} -
 1)=a^k-1$.
 
 We conclude that $a^k-1=d$ and the lemma is proved.
\end{proof} 

We are now ready to prove Theorem~\ref{thm:doublingdichotomy} in the
one dimensional case.

\begin{proof}[Proof of Theorem~\ref{thm:doublingdichotomy} when $d = 1$]
  In the case of $\sum_{n=1}^{\infty} r_n<\infty$ the result will
  follow from the easy part of the Borel--Cantelli lemma.  In the
  case $\sum_{n=1}^{\infty} r_n=\infty$ the statement will follow
  from the special case of Lemma~\ref{lem:harman} with $H=1$.
  
  In our use of Lemma~\ref{lem:harman}, we let $A_n=E_n=\{\, x\in
  X:T^n(x)\in B(x,r_n) \,\}$ recalling that
  \begin{align}
    \label{Rio2}
    \Rio =\bigcap_{k=1}^{\infty}\bigcup_{n=k}^{\infty} E_n.
  \end{align}
  To analyse $\mu (E_n)$ and $\mu (E_n \cap E_m)$, we define the
  function
  \begin{equation*}
    G_n(x)=\begin{cases}
    1 & \text{ if } \num{x}<r_n\\
    0 & \text{ otherwise}.
    \end{cases}
  \end{equation*}
  Since $G_n(x)$ is a function on $\R/\Z$, we may periodically
  extend it to all of $\R$ and write it via its Fourier series,
  i.e.
  \begin{equation*}
    G_n(x)=\sum_{l\in\Z} c_{n,l} e^{2\pi i l x}.
  \end{equation*}
  The function $G_n (T^n x - x)$ is the indicator function of
  $E_n$. Hence, we have
  \[
  \mu (E_n) = \int G_n (T^n x - x) \, \mathrm{d} x = \sum_{l \in
    \mathbbm{Z}} c_{n,l} \int e^{- i 2 \pi l (a^n - 1) x} \,
  \mathrm{d} x.
  \]
  In the sum above, all integrals are zero, except for $l =
  0$. Hence we have
  \[
  \mu (E_n) = c_{n,0} = \int G_n (x) \, \mathrm{d} x = 2 r_n.
  \]
  
  It now follows that if $\sum r_n < \infty$ then $\sum \mu (E_n)
  < \infty$ and the easy part of the Borel--Cantelli lemma
  implies that $\mu (\Rio) = 0$. Of course, for $d=1$ this also
  follows directly from Theorem \ref{thm:RioMeasZero}.

  We assume from now on that $\sum r_n = \infty$.

  Using the Fourier series for $G_n$ we rewrite the quantity
  $\mu(E_m\cap E_n)$. We have
  \begin{align*}
    \mu(E_m\cap E_n) = \int \een_{E_m\cap E_n} \, \mathrm{d} \mu
    &=\int \een_{{E_m}}\een_{{E_n}}\, \mathrm{d} \mu\\ &=\int
    G_m(T^m x-x)G_n(T^n x-x) \, \mathrm{d} \mu.
  \end{align*}
  Hence we may write
  \begin{align}\label{Fouriersum}
    \int G_m&(T^mx-x) G_n(T^nx-x)\, \mathrm{d} \mu \nonumber\\
    &=\sum_{(k,l)\in\Z^2} c_{m,k}c_{n,l}\int e^{2\pi i (k(T^mx-x)+l(T^nx-x))} \, \mathrm{d} \mu\nonumber\\
    &=\sum_{(k,l)\in\Z^2} c_{m,k}c_{n,l}\int e^{2\pi i (k((a^m x\text{ mod }1)-x)+l((a^n x\text{ mod }1)-x))} \, \mathrm{d} \mu\nonumber\\
    &=\sum_{(k,l)\in\Z^2} c_{m,k}c_{n,l}\int e^{2\pi i (k(a^mx-x)+l(a^n x-x))} \, \mathrm{d} \mu\nonumber\\
    &=\sum_{(k,l)\in\Z^2} c_{m,k}c_{n,l}\int e^{2\pi i (k(a^m-1)+l(a^n-1))x} \, \mathrm{d} \mu\nonumber\\
    &=c_{m,0}c_{n,0}+\sum_{(k,l)\in\Z^2\backslash\tub{(0,0)}} c_{m,k}c_{n,l}\int e^{2\pi i (k(a^m-1)+l(a^n-1))x} \, \mathrm{d} \mu.
  \end{align}
  In the above equations we were allowed to ignore the
  $(\text{mod } 1)$ due to the periodicity of $e^{2\pi i k
    x}$. It is well known that
  \begin{equation*}
    \int e^{2\pi i (k(a^m-1)+l(a^n-1))x} \,\mathrm{d} \mu
    =\begin{cases} 1 & \text{if } k(a^m-1)+l(a^n-1)=0\\ 0 &
    \text{if } k(a^m-1)+l(a^n-1)\neq 0.
    \end{cases}
  \end{equation*}
  Hence we only get a contribution to the sum above when 
  \begin{equation}
    \label{gcdeq}
    k(a^m-1)+l(a^n-1)=0\enskip\iff\enskip -l=\frac{a^m-1}{a^n-1}
    k.
  \end{equation}
  In the following we look for the integer solutions $(k,l)$ to
  this equation. Generally we know, that given an equation
  $x=\frac{p}{q}y$ with $x,y,p,q \in \Z$, if $\frac{p}{q}$ are on
  lowest terms then the integer solutions to the equation are
  given by $(x,y)=(pj,qj)$, $j\in \Z$. Denote by
  $a_{(x,y)}:=\frac{a^x-1}{a^y-1}$. Lemma~\ref{gcdlemma} tells us
  that
  \begin{equation*}
    \frac{a_{(m,p)}}{a_{(n,p)}}, \quad p:=\gcd (m,n)
  \end{equation*}
  is on lowest terms and hence the integer pairs $(k,l)$ solving
  \eqref{gcdeq} are given by $\para{a_{(m,p)}j, a_{(n,p)}(-j)}$,
  $j\in\Z$. This means that the sum in \eqref{Fouriersum} may be
  rewritten as
  \begin{equation*}
    c_{m,0} c_{n,0} + \sum_{j\in\Z \setminus \{0\}} c_{m,a_{(m,p)}j}
    c_{n,a_{(n,p)}(-j)}.
  \end{equation*}
  Recall that $c_{m,0} = 2 r_m = \mu(E_m)$. Since $\var G_n \leq
  2$, we have by Lemma~\ref{lem:fouriercoeff} that $|c_{n,k}|
  \leq \frac{1}{\pi |k|}$. Using these estimates on the Fourier
  coefficients, we can now estimate the quantity $\mu(E_m\cap
  E_n)-\mu(E_m)\mu(E_n)$, namely,
  \begin{align*}
    \mu(E_m\cap E_n)-\mu(E_m)\mu(E_n) &= \sum_{j\in\Z\setminus
      \{0\}} c_{m,a_{(m,p)}j}c_{n,a_{(n,p)}(-j)}\\ &\leq
    \sum_{j\in\Z\setminus \{0\}}
    \num{c_{m,a_{(m,p)}j}}\num{c_{n,a_{(n,p)}(-j)}}\\ &\leq
    \frac{1}{\pi^2} \sum_{j\in\Z\setminus \{0\}}
    \frac{1}{\num{a_{(m,p)}j}}\frac{1}{\num{a_{(n,p)}(-j)}}\\ &=
    \frac{1}{\pi^2} \sum_{j\in\Z\setminus \{0\}} \frac{1}{\bigl|
      \frac{a^m-1}{a^p-1}j \bigr| \bigl| \frac{a^n-1}{a^p-1}(-j)
      \bigr| }\\ &= \frac{1}{\pi^2} \sum_{j\in\Z\setminus \{0\}}
    \frac{1}{\num{j}^2}\frac{(a^p-1)^2}{(a^m-1)(a^n-1)}\\ &<
    \frac{4}{\pi^2} \sum_{j\in\Z\setminus \{0\}}
    \frac{1}{\num{j}^2}\frac{a^{2p}}{a^{m+n}}\\ &\leq
    2a^{2p-(m+n)}.
  \end{align*}
  Inserting this in condition \eqref{eq:harman} of
  Lemma~\ref{lem:harman} with $H=1$ we get
  \begin{align*}
    \frac{\sum_{1\leq n<m\leq k}
      2a^{2p-(m+n)}}{\para{\sum_{n=1}^k
        \mu(E_n)}^2}&=\frac{2\sum_{m=1}^k \sum_{n=1}^{m-1}
      a^{2p-(m+n)}}{\para{\sum_{n=1}^k \mu(E_n)}^2}.
  \end{align*}
  By assumption we know that the denominator goes to infinity and
  we will show that the numerator converges for $k\to\infty$. We
  will do this by splitting the sum in the numerator as follows
  \begin{equation*}
    \sum_{m=1}^k\para{\sum_{n=1}^{\lfloor \frac{m}{2} \rfloor}
      a^{2p-(m+n)}+\sum_{n=\lceil \frac{m}{2}\rceil}^{m-1}
      a^{2p-(m+n)}}.
  \end{equation*} 
  We will use two different estimates. For the first sum we will
  use the trivial estimate $p\leq n$. For the second sum we will
  use that $m=px$, $n=py$, $x,y \in\N$ implies that
  $m-n=p(x-y)\geq p$ since $m>n$. Using this we get that
  \begin{align*}
    \sum_{1\leq n<m\leq k} a^{2p-(m+n)}&\leq
    \sum_{m=1}^k\para{\sum_{n=1}^{\lfloor \frac{m}{2} \rfloor}
      a^{2n-(m+n)}+\sum_{n=\lceil \frac{m}{2}\rceil}^{m-1}
      a^{2(m-n)-(m+n)}}\\ &=\sum_{m=1}^k\para{\sum_{n=1}^{\lfloor
        \frac{m}{2} \rfloor} a^{n-m}+\sum_{n=\lceil
        \frac{m}{2}\rceil}^{m-1} a^{m-3n}}\\ &\leq
    \sum_{m=1}^k\para{\frac{m}{2} a^{\frac{m}{2}-m}+ \frac{m}{2}
      a^{m-3\frac{m}{2}}}\\ &= \sum_{m=1}^k m
    a^{-\frac{m}{2}}\\ &\leq \sum_{m=1}^\infty m
    a^{-\frac{m}{2}}.
  \end{align*}
  This series converges and hence condition \eqref{eq:harman} is
  satisfied. The result then follows from Lemma~\ref{lem:harman}.
\end{proof}

\subsection{The case of general dimension} \label{sec:higherdim}

The proof follows the strategy employed for the one dimensional
case but with certain adaptations. The notation $E_n$ and $G_n$
remains unchanged.

We first take care of the measure zero case which is just a
simple adaptation. Written as Fourier series $G_n$ becomes
\begin{equation*}
  G_n(x)=\sum_{l\in\Z^d} c_{n,l} e^{2\pi i \ip{l}{x}}.
\end{equation*}
In the same way as in the one dimensional case, we have
\begin{align*}
  \mu (E_n) = \int G_n (T^n x - x) \, \mathrm{d} x &= \sum_{l \in
    \mathbbm{Z}^d} c_{n,l} \int e^{i 2 \pi \ip{l}{(A^n - I) x}}
  \, \mathrm{d} x \\ &= \sum_{l \in \mathbbm{Z}^d} c_{n,l} \int
  e^{i 2 \pi \ip{(A^n - I)^T l}{x}} \, \mathrm{d} x.
\end{align*}
Since $A$ has no eigenvalues that are roots of unity, the matrix
$(A^n - I)^T$ is an invertible integer matrix and $(A^n - I)^T l
= \bar{0}$ only if $l = \bar{0}$, where $\bar{0}$ denotes the
zero-vector in $d$ dimensions.  Hence, all integrals in the sum
above are zero, unless $l = \bar{0}$. It follows that
\[
\mu (E_n) = c_{n, \bar{0}} = \int G_n \, \mathrm{d} x = c_d
r_n^d,
\]
where $c_d$ is the volume of the $d$ dimensional unit ball.  Now,
if $\sum r_n^d < \infty$ then $\sum \mu (E_n) < \infty$ and the
easy part of the Borel--Cantelli lemma implies that $\mu (\Rio) =
0$.

We assume from now on that $\sum r_n^d = \infty$ and that all
eigenvalues of $A$ lie outside the unit circle. There is a number
$\lambda > 1$ such that all eigenvalues of $A$ have modulus
strictly larger than $\lambda$. In this case we approximate the
functions $G_n$ by $C^r$-functions. As a parameter in this
approximation, we choose $\varepsilon > 0$.  Let $f \in
C^r([0,1])$ be such that $f$ is monotone, $f(0) = 1$, $f(1)=0$
and $f'$ has compact support in $(0,1)$.  Put $f_n (t) = f
\bigl(\frac{t-r_n}{\varepsilon r_n} \bigr)$.

We approximate $G_n$ by
\begin{equation*}
  \widetilde{G}_n(x)=\begin{cases} 1 & \text{ if } \num{x}\leq
  r_n\\ f_n(|x|) & \text{ if } r_n<\num{x}\leq
  (1+\epsilon)r_n\\ 0 & \text{ if } (1+\epsilon)r_n<\num{x}
	\end{cases}
\end{equation*}
where $|x|$ denotes the length of the vector $|x|$.  Note that
$G_n\leq\widetilde{G}_n$.  Written as Fourier series
$\widetilde{G}_n$ becomes
\begin{equation*}
  \widetilde{G}_n(x)=\sum_{l\in\Z^d} \widetilde{c}_{n,l} e^{2\pi
    i \ip{l}{x}}.
\end{equation*}
Analogous to above, we get
\[
(1 + \varepsilon)^d \mu (E_n) \geq \int \widetilde{G}_n \,
\mathrm{d} x = \tilde{c}_{n,\bar{0}}.
\]
Since $\widetilde{G}_n$ is a $C^r$-function, a scaling argument
gives the estimate
\begin{equation} \label{eq:Fourierdecay}
  |\widetilde{c}_{n,l}| \leq \frac{C r_n^{-r}}{|l|^r},
\end{equation}
where $C$ is a uniform constant.

Without loss of generality we may assume that the sequence $r_n$
satisfies
\begin{equation} \label{eq:decayassumption}
  r_n \leq \frac{1}{n^2} \qquad \Rightarrow \qquad r_n = 0.
\end{equation}
We will prove that the set $\Rio$ has full measure under this
assumption. If this assumption is not satisfied, then we may
simply replace each $r_n$ which satisfies $r_n \leq 1/n^2$ by
$r_n = 0$. This does not change $\sum r_n^d = \infty$ and the
resulting set $\Rio$ is smaller but of full measure, so that the
set $\Rio$ for the original sequence $r_n$ is of full measure as
well.

Since $G_n\leq\widetilde{G}_n$ we have
\begin{align*}
  \mu(E_m\cap E_n)\leq \int \widetilde{G}_m(T^m
  x-x)\widetilde{G}_n(T^n x-x) \, \mathrm{d} \mu.
\end{align*}
This gives
\begin{multline}\label{Fouriersum2}
  \int \widetilde{G}_m(T^mx-x) \widetilde{G}_n(T^nx-x)\,
  \mathrm{d} \mu \\ = \widetilde{c}_{m,\bar{0}}
  \widetilde{c}_{n,\bar{0}}+\sum_{(k,l)\in\Z^d\times
    \Z^d\backslash\tub{(\bar{0},\bar{0})}} \widetilde{c}_{m,k}
  \widetilde{c}_{n,l}\int e^{2\pi i \ip{(A^m-I)^T k+(A^n-I)^T
      l}{x}} \, \mathrm{d} \mu.
\end{multline}

Analogue to the one dimensional case, we only get a contribution to the
sum above when
\begin{align*}
  (A^m-I)^T k + (A^n-I)^T l = 0 &\iff (A^m-I)^T k = - (A^n-I)^T l
  \\ &\iff ((A^T)^m - I) k = - ((A^T)^n - I) l, \nonumber
\end{align*}
where $l,k\in\Z^d$.  We will need the following lemma. We thank Victor
Ufnarovski for proving this lemma for us.

\begin{lemma} \label{lem:ufnarovski}
  Let $B$ be a square integer matrix such that no eigenvalue is a
  root of unity.  Let $p=\gcd (m,n)$. Then
  \begin{equation*}
    (B^m-I)k=(B^n-I)l, \quad l,k\in\Z^d
  \end{equation*}
  if and only if
  \begin{equation*}
    \begin{cases}
      k=(I+B^p+\ldots+B^{n-p})j\\
      l=(I+B^p+\ldots+B^{m-p})j
    \end{cases}
  \end{equation*}
  for some $j\in\Z^d$.
\end{lemma}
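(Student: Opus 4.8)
The plan is to reduce the matrix equation to a coprimality statement about polynomials in the single integer matrix $q:=B^p$. Write $a=m/p$ and $b=n/p$, so $\gcd(a,b)=1$, and let $\phi_r(x)=1+x+\dots+x^{r-1}$, so that $P_m:=I+B^p+\dots+B^{m-p}=\phi_a(q)$ and $P_n:=I+B^p+\dots+B^{n-p}=\phi_b(q)$. The telescoping identities $B^m-I=(B^p-I)P_m$ and $B^n-I=(B^p-I)P_n$ hold, and $B^p-I$, $P_m$, $P_n$ all commute, being polynomials in $B$. The ``if'' direction is then immediate: if $k=P_nj$ and $l=P_mj$, then $(B^m-I)k=(B^p-I)P_mP_nj=(B^p-I)P_nP_mj=(B^n-I)l$.

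For the ``only if'' direction, suppose $(B^m-I)k=(B^n-I)l$, i.e. $(B^p-I)(P_mk)=(B^p-I)(P_nl)$. Since no eigenvalue of $B$ is a root of unity, $\det(B^p-I)=\prod_i(\lambda_i^p-1)\neq 0$, so $B^p-I$ is injective on $\Q^d$; applied to the integer vector $P_mk-P_nl$ this gives $P_mk=P_nl$ in $\Z^d$. Next I would invoke a B\'ezout identity in $\Z[x]$: because $\gcd(a,b)=1$, running the subtractive Euclidean algorithm on the exponents (exactly as in the proof of Lemma~\ref{gcdlemma}, via identity \eqref{al:nt}) writes $x^{\gcd(a,b)}-1=x-1$ as a $\Z[x]$-combination of $x^a-1$ and $x^b-1$; dividing through by the common factor $x-1$ yields $u(x)\phi_a(x)+v(x)\phi_b(x)=1$ for some $u,v\in\Z[x]$. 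Substituting $x=q=B^p$ gives integer matrices $U=u(B^p)$, $V=v(B^p)$ with $UP_m+VP_n=I$. Now put $j:=Vk+Ul\in\Z^d$. Using $P_mk=P_nl$ and commutativity, $P_nj=VP_nk+UP_nl=VP_nk+UP_mk=(VP_n+UP_m)k=k$ and likewise $P_mj=VP_mk+UP_ml=VP_nl+UP_ml=(VP_n+UP_m)l=l$, which is precisely the claimed form.

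The telescoping identities and the verification that $j=Vk+Ul$ works are routine; the one substantive point is the integral B\'ezout identity, i.e. that $\phi_a$ and $\phi_b$ generate the unit ideal of $\Z[x]$ when $\gcd(a,b)=1$. I expect the main obstacle to be organisational rather than conceptual: tracking which of $P_m$, $P_n$ multiplies $k$ versus $l$, and ensuring that the cancellation of $B^p-I$ is justified over $\Z$ — which it is, through injectivity, even though $B^p-I$ need not be invertible as an integer matrix.
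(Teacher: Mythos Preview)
Your proof is correct and follows essentially the same route as the paper's: reduce to the coprime case via $q=B^p$, verify the easy direction by the telescoping factorisation, cancel $B^p-I$ using the eigenvalue hypothesis to get $P_mk=P_nl$, establish an integral B\'ezout identity $U P_m+V P_n=I$, and set $j=Vk+Ul$. The only cosmetic difference is in how the B\'ezout identity is obtained: you deduce it by first writing $x-1$ as a $\Z[x]$-combination of $x^a-1$ and $x^b-1$ and then dividing through by $x-1$, whereas the paper runs the subtractive Euclidean algorithm directly on the polynomials $\phi_a,\phi_b$ via the identity $\phi_{a-b}=\phi_a-x^{a-b}\phi_b$; both arguments are equivalent.
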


\begin{proof}
  Replacing $B^p$ by $B$, we may assume that $\gcd (m,n) = 1$. We are
  then to prove that $(B^m-I)k=(B^n-I)l$ holds if and only if
  \begin{equation} \label{eq:solutions}
    \begin{cases}
      k=(I+B+\ldots+B^{n-1})j\\
      l=(I+B+\ldots+B^{m-1})j
    \end{cases}
  \end{equation}
  for some $j\in\Z^d$.

  Since $(B^m - I)(I+B+\ldots+B^{n-1}) = (B^n -
  I)(I+B+\ldots+B^{m-1})$ it is clear that \eqref{eq:solutions} are
  solutions to the equation $(B^m-I)k=(B^n-I)l$. It remains to prove
  that these are the only solutions.

  We first prove that there are integer polynomials $u$ and $v$ such
  that
  \begin{equation} \label{eq:euclide}
  u (x) (1 + x + \ldots + x^{m-1}) + v (x) (1 + x + \ldots + x^{n-1})
  = 1.
  \end{equation}
  Let $Z$ be the set of pairs $(m,n)$ of natural numbers for which
  \eqref{eq:euclide} holds for some integer polynomials $u$ and
  $v$. Clearly, $(1,1) \in Z$.

  Suppose that $m > n$. Then $(m-n,n) \in Z$ implies that $(m,n) \in
  Z$. Similarly, if $n > m$, then $(m, n-m) \in Z$ implies that $(m,n)
  \in Z$.

  Since $\gcd(m,n) = 1$, we can repeatedly reduce the pair $(m,n)$ by
  replacing it with $(m-n, n)$ or $(m, n-m)$, and as in the Euclidean
  algorithm, this proceedure will eventually end up in the pair $(1,1)
  \in Z$. Hence $(m,n) \in Z$. This proves that there are integer
  polynomial $u$ and $v$ such that \eqref{eq:euclide} holds.
  
  From \eqref{eq:euclide}, we get
  \[
  u (B) (I + B + \ldots + B^{m-1}) + v(B) (I + B + \ldots + B^{n-1}) =
  I,
  \]
  and in particular
  \begin{equation} \label{eq:kequality}
  k = u(B) (I + B + \ldots + B^{m-1}) k + v(B) (I + B + \ldots +
  B^{n-1}) k
  \end{equation}
  for any vector $k$.

  Let $P$ be such that $P = (B^m - I) k = (B^n - I) l$ for some $k,l
  \in \mathbbm{Z}^d$. Then 
  \begin{align*}
    P &= (B - I) (I + B + \ldots + B^{m-1}) k \\ &= (B - I) (I +
    B + \ldots + B^{n-1}) l \\ &= (B^m - I)k = (B^n - I)l,
  \end{align*}
  and hence
  \[
  (I + B + \ldots + B^{m-1}) k = (I + B + \ldots + B^{n-1}) l.
  \]
  We have
  \[
  u (B) (I + B + \ldots + B^{m-1}) k = u(B) (I + B + \ldots + B^{n-1}) l.
  \]
  Using \eqref{eq:kequality} and the fact that $u(B), v(B)$ are
  polynomials in $B$, we get that
  \begin{align*}
    k &= u(B) (I + B + \ldots + B^{m-1}) k + v(B) (I + B + \ldots
    + B^{n-1}) k \\ &= u(B) (I + B + \ldots + B^{n-1}) l + v(B)
    (I + B + \ldots + B^{n-1}) k \\ &= (I + B + \ldots + B^{n-1})
    ( u(B) l + v(B) k)\\ &= (I + B + \ldots + B^{n-1}) j
  \end{align*}
  where $j = u(B) l + v(B) k$ is an integer vector. Similarly, we
  get
  \[
  l = (I + B + \ldots + B^{m-1}) j
  \]
  with the same $j$.
\end{proof}

In the following, set
\begin{align*}
  A_m:&=I+(A^T)^p+\ldots+(A^T)^{m-p},\\
  A_n:&=I+(A^T)^p+\ldots+(A^T)^{n-p},
\end{align*}
where $p$ always denotes $p = \gcd(m,n)$.

Lemma~\ref{lem:ufnarovski} tells us that the sum in \eqref{Fouriersum2}
may be rewritten as
\begin{equation*}
  \widetilde{c}_{m,\bar{0}} \widetilde{c}_{n,\bar{0}} +
  \sum_{j\in\Z^d \setminus \{\bar{0}\}} \widetilde{c}_{m,A_n j}
  \widetilde{c}_{n, - A_m j}
\end{equation*}
and as was noted above
\begin{equation*}
  \widetilde{c}_{n,\bar{0}} = \int \widetilde{G}_n(x)\,
  \mathrm{d} \mu \leq (1+\epsilon)^d\mu(E_n).
\end{equation*}
Hence, if we let $H = (1 + \varepsilon)^{2d}$, then
\[
\sum_{m,n= 1}^N \bigl( \mu(E_n \cap E_m) - H \mu (E_n) \mu (E_m)
\bigr) \leq \sum_{m,n= 1}^N \sum_{j\in\Z^d \setminus \{\bar{0}\}}
\widetilde{c}_{m,A_n j} \widetilde{c}_{n, - A_m j}.
\]

Using the estimate \eqref{eq:Fourierdecay} and the assumption
\eqref{eq:decayassumption} we have if $r_n, r_m \neq 0$ that 
\[
|\widetilde{c}_{m,A_n j} \widetilde{c}_{n,A_m j}| \leq C (r_n
r_m)^{-r} \frac{1}{| A_n j |^r | A_m j |^r} \leq C^2
n^{2r} m^{2r} \frac{1}{| A_n j |^r | A_m j |^r}.
\]
We may estimate that
\[
\num{A_n j}  \geq c \lambda^{n-p} \num{j } \qquad \text{and}
\qquad \num{A_m j} \geq c \lambda^{m-p} \num{j},
\]
for some uniform constant $c$.
Hence
\[
|\widetilde{c}_{m,A_n j} \widetilde{c}_{n,A_m j}| \leq C^2 c^{-2}
n^{2r} m^{2r} \lambda^{r(2p - m - n)} \num{j}^{-2r},
\]
an estimate which is certainly true also when either $r_n = 0$ or
$r_m = 0$, since then all the corresponding Fourier coefficients
are zero and $|\widetilde{c}_{m,A_n j} \widetilde{c}_{n,A_m j}| =
0$.

We conclude that if $r$ is sufficiently large, then
\begin{align*}
  \sum_{m,n= 1}^N \bigl( \mu(E_n \cap E_m) &- H \mu (E_n) \mu
  (E_m) \bigr)\\ &\leq \sum_{m,n= 1}^N \sum_{j\in\Z^d \setminus
    \{\bar{0}\}} C^2 c^{-2} n^{2r} m^{2r} \lambda^{r(2p - m - n)}
  \num{j }^{-2r} \\ & = C' \sum_{m,n= 1}^N n^{2r} m^{2r}
  \lambda^{r(2p - m - n)}.
\end{align*}
Just as in the one dimensional case, this is bounded as $N \to
\infty$. Thus, Lemma~\ref{lem:harman} implies that $\mu(\Rio)
\geq \frac{1}{H} = (1+\varepsilon)^{-2d}$. As $\varepsilon$ can
be taken as small as we wish, we can make $H$ arbitrarily close
to $1$ and we conclude that $\mu(\Rio)=1$.

\section{Proof of Theorem \ref{theo:EARgen}} \label{sec:EARgeneral}

In this Section we prove our very general result on quantitative
uniform recurrence.

\begin{proof}[Proof of Theorem \ref{theo:EARgen}]
  If $V$ is a measurable set and $t \in \mathbbm{N}$, then we let
  \[
  V (t) = \{\, x \in V : T^i (x) \not \in V \text{ for all }1\leq
  i \leq t \,\}.
  \]
  We have $\mu (V(t)) \leq 1/t$ because $T^{-i}(V(t))$, $1\leq i
  \leq t$, are disjoint sets of equal measure $\mu(V(t))$.
	
  Fix $\beta > \alpha$ and choose $\beta'$ with
  $\beta>\beta'>\alpha$.  The proof contains two parameters
  $\gamma > 0$ and $\theta > 1$ that will be chosen later. For
  each $m \in \mathbbm{N}$, we let $R_m = m^{-\gamma}$, and we
  take $p_m$ so that
  \[
  m = \frac{4 p_m^2}{R_m^{\beta'}} \qquad \Leftrightarrow \qquad
  p_m = \frac{1}{2} m^{\frac{1 - \beta' \gamma}{2}}.
  \]
  We require that $\gamma \in (1/\beta, 1/\beta')$ so that $p_m
  \to \infty$ with $m$.
	
  When $m$ is large enough, since $\udimb X < \beta' < \beta$,
  there is a cover $\{B_{m,i}\}_{i \in I_{m,0}}$ of $X$ by balls
  of diameter $|B_{m,i}|=R_m$ such that the number of balls is at
  most $R_m^{-\beta}$. We then have
  \begin{equation} \label{eq:coversum}
    \sum_i |B_{m,i}|^{\beta'} \leq R_m^{\beta' - \beta} < 1
  \end{equation}
  if $m$ is large enough.
        
  By Vitali's covering lemma which holds true in every metric
  space \cite[Theorem 1.2]{Heinonen}, there is $I_m \subset
  I_{m,0}$ such that the balls $B_{m,i}$, $i \in I_m$ are
  pairwise disjoint and such that $\{5 B_{m,i}\}_{i \in I_m}$
  covers $X$. (If $B$ is a ball, then $5B$ denotes the ball with
  same centre as $B$ and with radius $5$ times as large.)
	
  By replacing the balls in the cover $\{5 B_{m,i}\}_{i \in I_m}$
  by subsets, we can get a cover $\{V_{m,i}\}_{i \in I_m}$ of
  $X$, such that the sets $V_{m,i}$ are pairwise disjoint and
  such that
  \[
  1 \leq \frac{|V_{m,i}|}{R_m} \leq 5.
  \]
	
  Let $J_m = \{\, i \in I_m : 2 p_m \mu (V_{m,i}) \leq
  |B_{m,i}|^{\beta'} \,\}$. Then
  \begin{equation} \label{eq:measure1}
    \mu \biggl( \bigcup_{i \in J_m} V_{m,i} \biggr) = \sum_{i \in
      J_m} \mu (V_{m,i}) \leq \frac{1}{2p_m} \sum_i
    |B_{m,i}|^{\beta'} < \frac{1}{2 p_m},
  \end{equation}
  by \eqref{eq:coversum}.
	
  For $i \not \in J_m$ we have
  \begin{equation} \label{eq:measure2}
    \mu (V_{m,i} (m)) \leq \frac{1}{m} \leq \frac{R_m^{\beta'}}{4
      p_m^2} \leq \frac{2 p_m \mu (V_{m,i})}{4 p_m^2} =
    \frac{1}{2 p_m} \mu (V_{m,i}).
  \end{equation}
	
  Let
  \[
  G_m = \bigcup_{i \in J_m} V_{m,i} \cup \bigcup_{i \not \in J_m}
  V_{m,i} (m).
  \]
  By \eqref{eq:measure1} and \eqref{eq:measure2} we then have
  \[
  \mu (G_m) \leq \frac{1}{2 p_m} + \frac{1}{2 p_m} =
  \frac{1}{p_m}.
  \]
	
  Notice that if $x \in \complement G_m$, then there is an $i$
  and a $k$ with $1 \leq k \leq m$ such that $x, T^k x \in
  V_{m,i}$. Hence, $d (x, T^k x) \leq 5 R_m$.
	
  Put $m_j = j^\theta$. Then
  \[
  \sum_{j = 1}^\infty \mu (G_{m_j}) \leq \sum_{j=1}^\infty
  p_{m_j}^{-1} = \sum_{j=1}^\infty 2 j^{- \theta \frac{1 - \beta'
      \gamma}{2}},
  \]
  which is convergent provided $\theta \frac{1 - \beta'
    \gamma}{2} > 1$. Since $1 - \beta' \gamma > 0$, we can choose
  $\theta > 1$ sufficiently large so that the above series is
  convergent.
	
  It then follows by the Borel--Cantelli lemma that for such a
  choice of $\theta$, we have
  \[
  \mu (\limsup_{j \to \infty} G_{m_j}) = 0.
  \]
  Therefore, we have
  \[
  \mu (\liminf_{j \to \infty} \complement G_{m_j}) = 1.
  \]
  Let $F = \liminf_{j \to \infty} \complement G_{m_j}$. Whenever
  $x \in F$, there is a $j_0$ which depends on $x$ such that for
  any $j > j_0$, there is a $k \leq m_j = j^\theta$ with
  \[
  d (x, T^k x) \leq 5 R_{m_j} = 5 j^{- \theta \gamma}.
  \]
	
  Let $x \in F$ and suppose that $m > j_0^\theta$. There is then
  a $j$ such that $j^\theta < m \leq (j+1)^\theta$. There is
  therefore a $k \leq j^\theta < m$ such that
  \[
  d (x, T^k x) \leq 5 R_{m_j} = 5 j^{- \theta \gamma} = 5 \biggl(
  \frac{j+1}{j} \biggr)^{\theta \gamma} (j+1)^{-\theta \gamma}
  \leq 5 \cdot 2^{\theta \gamma} m^{-\gamma}.
  \]
  Consequently, for any large enough $m$, there is a $k < m$ with
  \[
  d(x, T^k x) \leq 5 \cdot 2^{\theta \gamma} m^{-\gamma}.
  \]
  Since $\gamma > 1/\beta$ the theorem follows from this statement.
\end{proof}

\section{Eventually always returning points for the doubling map} \label{sec:ear}

We consider the set of \emph{eventually always returning points}
defined by
\begin{align*}
  \ear&:=\Bigl\{\, x\in X: \exists\, n\in \N\, \forall\, m\geq n:
  \bigl\{T^k x \bigr\}_{k=1}^{m}\cap B(x,r_m)\neq \emptyset
  \,\Bigr\}\\ &=\bigcup_{n=1}^{\infty} \bigcap_{m=n}^{\infty}
  \bigcup_{k=1}^{m} \bigl\{\, x \in X : T^k x \in B(x,r_m)
  \,\bigr\}\\
\end{align*}
for the doubling map $T(x)=2x \mod 1$ on $X=[0,1]$. We write
\begin{align*}
  E_{k,m}:=& \bigl\{\, x\in X : T^k x \in B(x,r_m) \,
  \bigr\},\\ C_m:=&\bigcup_{k=1}^{m}
  E_{k,m},\\ A_n:=&\bigcap_{m=n}^{\infty} C_m.
\end{align*}
Since $A_{n} \subset A_{n+1}$ we have $\mu ( \ear ) = \lim_{n \to
  \infty} \mu(A_n)$.

\subsection{Sufficient condition for measure one}

\begin{prop} \label{prop:Ear}
  Let $X=[0,1]$, $T(x)=2x \mod 1$ and let $\mu$ denote the
  Lebesgue measure. We consider $r_m = \frac{\Delta_m h
    (\Delta_m)}{m}$ with $h(n) \to \infty$ as $n\to \infty$,
  $\Delta_m \to \infty$ as well as $\frac{m}{\Delta_m}\to \infty$
  and $\frac{m^{2+\sigma}}{\Delta_m} 2^{-\Delta_m} \leq 1$ for
  some $\sigma > 0$ for $m$ sufficiently large. Then
  $\mu(\ear)=1$.
\end{prop}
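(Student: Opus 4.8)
The plan is to show $\sum_{m} \mu(\complement C_m) < \infty$; since $A_n \subseteq A_{n+1}$ and $\complement A_n \subseteq \bigcup_{m\ge n}\complement C_m$, this forces $\mu(\complement A_n)\to 0$, hence $\mu(\ear) = \lim_n\mu(A_n) = 1$. If $r_m \ge 1$ then $\mu(\complement C_m) = 0$ (every $x\neq 1$ has $|Tx-x|<1$), so it suffices to estimate $\mu(\complement C_m)$ for those $m$ with $r_m < 1$; for such $m$ put $\ell_m = \lfloor\log_2(1/r_m)\rfloor + 1$, so that $r_m/2 \le 2^{-\ell_m} < r_m$ and $\ell_m \le 1/r_m$.

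Passing to binary expansions $x = 0.x_1x_2\cdots$, the basic observation is: if $x_{k+1}\cdots x_{k+\ell_m} = x_1\cdots x_{\ell_m}$ for some $k \le m$, then $|T^kx - x| \le 2^{-\ell_m} < r_m$, so $x \in E_{k,m}\subseteq C_m$. Thus $\complement C_m$ lies in the set of $x$ admitting no such $k$. I would then pick $N = \lfloor m/(s+\ell_m)\rfloor$ pairwise well-separated windows $\B_1,\dots,\B_N \subseteq \{1,\dots,m\}$, each an interval of $s := \lceil 2/r_m\rceil$ consecutive integers, with consecutive windows separated by a gap of $\ell_m$ and the first one starting past position $\ell_m$; since $s+\ell_m \le 4/r_m$ and $mr_m = \Delta_m h(\Delta_m) \to \infty$, we get $N \ge c\,\Delta_m h(\Delta_m)$ with $c>0$ universal. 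Conditioning on the prefix $w = x_1\cdots x_{\ell_m}$, the events $G_i = \Meng{x}{x_{k+1}\cdots x_{k+\ell_m} = w\text{ for some }k\in\B_i}$ depend on pairwise disjoint blocks of digits lying beyond position $\ell_m$, hence are conditionally independent given $w$, each with conditional probability $p_w := \mathbb{P}(w\text{ occurs somewhere among the first }s\text{ positions of an i.i.d.\ fair binary string})$. Therefore $\mu(\complement C_m) \le \max_w (1-p_w)^N$.

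The crux is a uniform lower bound $p_w \ge \tfrac14$ over all words $w$ of length $\ell_m$, which I would establish by a second moment argument on the number $Z$ of positions $j\in\{1,\dots,s\}$ at which $w$ occurs. We have $\mathbb{E} Z = s\,2^{-\ell_m}\ge 1$, and, splitting $\mathbb{E} Z^2$ by the displacement $d$ of a pair of occurrences, the diagonal contributes $\mathbb{E} Z$, displacements $d\ge \ell_m$ contribute at most $(\mathbb{E} Z)^2$, and for $1\le d\le \ell_m-1$ such a pair forces $\ell_m+d$ prescribed digits and requires $w$ to be compatible with its shift by $d$, contributing at most $2s\,2^{-\ell_m}\sum_{d\ge1}2^{-d} \le 2\mathbb{E} Z$; altogether $\mathbb{E} Z^2 \le (\mathbb{E} Z)^2 + 3\mathbb{E} Z$. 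Paley--Zygmund then gives $p_w = \mathbb{P}(Z\ge 1)\ge (\mathbb{E} Z)^2/\mathbb{E} Z^2 \ge \mathbb{E} Z/(3+\mathbb{E} Z)\ge\tfrac14$, uniformly in $w$. This is where I expect the difficulty to lie: the uniformity over $w$ is essential, and one cannot afford the cruder estimate using only the $\lfloor s/\ell_m\rfloor$ disjoint length-$\ell_m$ sub-blocks of a window, which loses a factor $\ell_m\asymp\log m$ and would force windows of length $\asymp \ell_m/r_m$ rather than $\asymp 1/r_m$ — the overlapping occurrences, tamed by the geometric autocorrelation sum $\sum_{d\ge1}2^{-d}\le 1$, are exactly what make the argument work.

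Combining, $\mu(\complement C_m) \le (3/4)^N \le \exp(-c'\Delta_m h(\Delta_m))$ for large $m$, with $c'>0$ universal. The hypothesis $\frac{m^{2+\sigma}}{\Delta_m}2^{-\Delta_m}\le 1$ forces $\Delta_m \ge (1+\sigma)\log_2 m$ for large $m$ (else $\Delta_m 2^{\Delta_m} < (1+\sigma)(\log_2 m)m^{1+\sigma} < m^{2+\sigma}$), while $\Delta_m\to\infty$ and $h\to\infty$ give $h(\Delta_m)\to\infty$; hence $c'\Delta_m h(\Delta_m)\ge c'(1+\sigma)(\log_2 m)h(\Delta_m)\ge 2\log m$ once $m$ is large, so $\mu(\complement C_m)\le m^{-2}$ eventually and $\sum_m\mu(\complement C_m)<\infty$, completing the proof.
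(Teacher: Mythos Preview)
Your argument is correct and takes a genuinely different route from the paper's proof. The paper works directly with the geometric interval structure of the sets $E_{k,m}$: it subsamples times $p_k = k\Delta_m$, observes that on the circle each $G_{p_k,m}$ is the indicator of $2^{p_k}-1$ equally spaced arcs, and iteratively counts how many ``hit'' sub-arcs of $\{G_{p_{k+1},m}=0\}$ fall inside each ``miss'' arc of $\prod_{j\le k} G_{p_j,m}$. The edge error (the ``$-2$'' in that interval count) is precisely what the hypothesis $\frac{m^{2+\sigma}}{\Delta_m}2^{-\Delta_m}\le 1$ is designed to absorb, and a geometric summation then yields $\mu(\complement C_m) < m^{-(1+\sigma)}$ directly. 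Your approach instead passes to the symbolic model, conditions on the initial word $w$, partitions the time interval into independent windows, and uses a second-moment/Paley--Zygmund bound to obtain a uniform hit probability $p_w\ge\tfrac14$ per window. This is more robust --- it would adapt readily to any full shift on finitely many symbols --- and you only invoke the weak consequence $\Delta_m\ge(1+\sigma)\log_2 m$ of the technical hypothesis, whereas the paper exploits its full strength to control boundary effects at each iteration. The paper's method gives a slightly sharper explicit bound with less machinery; your method trades that for generality and an elegant handling (via the autocorrelation sum $\sum_{d\ge1}2^{-d}$) of overlapping pattern occurrences, which is indeed the crux that prevents the loss of a logarithmic factor.
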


\begin{proof}
  Let $\varepsilon_m = \frac{1}{m^{1+\sigma}}$. 
  We define the function
  \begin{equation*}
    F_m(t)=\begin{cases}
    0 & \text{if } \abs{t} < r_m,\\
    1 & \text{otherwise.}
    \end{cases}
  \end{equation*}
  In order to describe the return of the point $x$ under $T^n$ we
  can use the map $F_m$ in the following way:
  \begin{equation*}
    G_{k,m}(x) \coloneqq F_m \bigl(T^kx - x \bigr)
    = \begin{cases} 0 & \text{if } \abs{T^kx-x} < r_m,\\ 1 &
      \text{otherwise.}
    \end{cases}
  \end{equation*}
  Then $G_{k,m}$ is the characteristic function of $\complement
  E_{k,m}$ using the notation from above. In that notation we
  also have
  \begin{align*}
    \mu \bigl( \complement C_m \bigr) = \mu \Biggl(
    \bigcap_{k=1}^m \complement E_{k,m} \Biggr) = \int
    \prod^m_{k=1} G_{k,m}(x) \; \dd x \leq \int
    \prod^{m/\Delta_m}_{k=1} G_{p_k,m}(x) \; \dd x
  \end{align*}
  with\footnote{For the sake of convenience we treat the numbers
  $p_k$ and $\frac{m}{\Delta_m}$ as integers avoiding the use of
  floor functions.} $p_k = k \cdot \Delta_m$. Thinking of $[0,1]$
  as the circle and identifying the endpoints of $[0,1]$, we note
  that $G_{p_1,m}(x)=F_m ( ( 2^{p_1}-1 )x )$ attains the value
  $1$ on $2^{p_1}-1$ many intervals of length
  $\frac{1-2r_m}{2^{p_1}-1}$, see Figure~\ref{fig:2xmod1fig}. On
  each of these intervals, the function $G_{p_2,m}$ takes the
  value $0$ on at least
  \begin{equation}
    \label{eq:numberIntervals}
    \frac{\frac{1-2r_m}{2^{p_1}-1} }{\frac{1}{2^{p_2}-1}} -2 =
    \biggl(1-2r_m -2 \cdot \frac{2^{p_1}-1}{2^{p_2}-1} \biggr)
    \cdot \frac{2^{p_2}-1}{2^{p_1}-1}
  \end{equation}
  many intervals of length $\frac{2r_m}{2^{p_2}-1}$.

  \begin{figure}
    \begin{center}
      \includegraphics{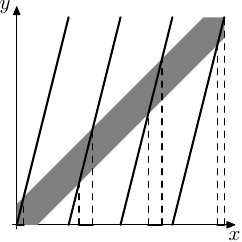} \qquad
      \includegraphics{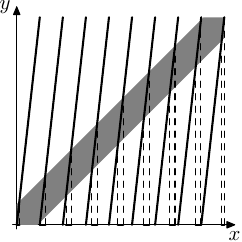}
    \end{center}
    \caption{The function $G_{k, m}$ is $1$ on the intervals
      marked on the $x$-axes. } \label{fig:2xmod1fig}
  \end{figure}
  
  For the rest of the proof, all inequalities and estimates
  should be considered true \emph{for $m$ sufficiently
  large}. Since $\frac{m^{2 + \sigma}}{\Delta_m} 2^{-\Delta_m}
  \leq 1$ and $h (\Delta_m) > 1$, we have
  \[
   2 \varepsilon_m r_m =
   2\frac{h(\Delta_m)\Delta_m}{m^{2+\sigma}}\geq 2 \frac{2^{p_1}
     - 1}{2^{p_2} - 1}.
  \]
  We can therefore estimate that the number of intervals in
  (\ref{eq:numberIntervals}) is bounded from below by
  \[
  (1 - 2 (1 + \varepsilon_m) r_m )
  \cdot \frac{2^{p_2}-1}{2^{p_1}-1}.
  \]
  Hence, the number of intervals where the function $G_{p_1,m}
  \cdot G_{p_2, m}$ is zero, but $G_{p_1,m}$ is not, is at least
  \[
  (2^{p_1} - 1) \biggl( (1 - 2 (1 + \varepsilon_m) r_m ) \cdot
  \frac{2^{p_2}-1}{2^{p_1}-1} \biggr) = (1 - 2 (1 +
  \varepsilon_m) r_m ) \cdot (2^{p_2}-1 ).
  \]
  The total length of these intervals is therefore at least
  \[
  \frac{2 r_m}{2^{p_1}-1} (1 - 2 (1 + \varepsilon_m) r_m ) \cdot
  (2^{p_2}-1 ) \geq 2 r_m (1 - 2 (1 + \varepsilon_m) r_m).
  \]
    
  Continuing like this, we observe that the product
  $\prod^m_{k=1} G_{k,m}(x)$ is $0$ on a length of at least
  \begin{align*}
    2r_m \cdot \sum^{m/\Delta_m}_{k=1} (1 - 2 (1 + \varepsilon_m)
    r_m )^{k-1} & = 2r_m \cdot \frac{1 - (1 - 2 (1 +
      \varepsilon_m) r_m )^{\frac{m}{\Delta_m}}}{2 (1 +
      \varepsilon_m) r_m} \\ & = \frac{1}{1+\varepsilon_m} \cdot
    \bigl( 1 - (1 - 2 (1 + \varepsilon_m) r_m
    )^{\frac{m}{\Delta_m}} \bigr) \\ & \geq
    \frac{1}{1+\varepsilon_m} \cdot \bigl( 1 - (1 - 2 r_m
    )^{\frac{m}{\Delta_m}} \bigr) \\ & \geq 1- \varepsilon_m,
  \end{align*}
  provided that 
  \begin{equation}  \label{eq:cond}
    \lim_{m \to \infty} \bigl(1 - 2 r_m
    \bigr)^{\frac{m}{\Delta_m}} = 0.
  \end{equation}

  Under condition (\ref{eq:cond}) we therefore obtain 
  \[
  \mu \bigl( \complement C_m \bigr) \leq \int
  \prod^{m/\Delta_m}_{k=1} G_{k,m}(x) \; \dd x < \varepsilon_m.
  \]
  Since $\sum_m \varepsilon_m < \infty$, we have $\mu (A_n) \to 1$ as
  $n \to \infty$. Hence, $\mu(\ear)=1$.
  
  To conclude we note that condition (\ref{eq:cond}) is satisfied
  for the choice $r_m = \frac{\Delta_m h(\Delta_m)}{m}$, since
  \begin{align*}
    \lim_{m \to \infty} \bigl( 1 - 2 r_m
    \bigr)^{\frac{m}{\Delta_m}} = \lim_{m \to \infty} \biggl(
    \biggl( 1 - 2 \frac{\Delta_m h (\Delta_m)}{m}
    \biggr)^{\frac{m}{\Delta_m h (\Delta_m)}} \biggr)^{h
      (\Delta_m)} = 0
  \end{align*}
  because of $\Delta_m \to \infty$ and 
  \[
  \lim_{m \to \infty} \biggl( 1 - 2 \frac{\Delta_m h
    (\Delta_m)}{m} \biggr)^{\frac{m}{\Delta_m h (\Delta_m)}} =
  \exp (-2) < 1. \qedhere
  \]
\end{proof}

\begin{proof}[Proof of part (\ref{thm:EAROne}) in Theorem~\ref{thm:EAR}]
  The choice $\Delta_m= (2+\sigma)\log_2(m)$ satisfies the
  assumptions of Proposition \ref{prop:Ear} since
  $\frac{m}{(2+\sigma)\log_2(m)} \to \infty$ and
  \[
  \frac{m^{2+\sigma}}{\Delta_m} 2^{-\Delta_m} =
  \frac{1}{(2+\sigma)\log_2(m)}<1. \qedhere
  \] 
\end{proof}

\subsection{Sufficient condition for measure zero}

In the converse direction, part (\ref{thm:EARZero}) in
Theorem~\ref{thm:EAR} follows from the next proposition.

\begin{prop}
  Let $X=[0,1]$, $T(x)=2x \mod 1$ and let $\mu$ denote the
  Lebesgue measure. Suppose that
  \begin{equation} \label{eq:cond2}
    \lim_{m\to \infty}m r_m =0.
  \end{equation}
  Then $\mu(\ear)=0$.
\end{prop}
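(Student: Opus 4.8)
The plan is to prove the stronger assertion that $\mu(C_m) \to 0$ as $m \to \infty$. This suffices: since $\ear = \bigcup_{n=1}^{\infty} A_n$ with $A_n = \bigcap_{m \geq n} C_m$ and the $A_n$ are increasing, we have $\mu(\ear) = \lim_{n \to \infty} \mu(A_n)$; but $A_n \subseteq C_m$ for every $m \geq n$, so $\mu(A_n) \leq \inf_{m \geq n} \mu(C_m) = 0$ once $\mu(C_m) \to 0$, and hence $\mu(\ear) = 0$.

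To bound $\mu(C_m)$ I would use nothing more than the union bound $\mu(C_m) \leq \sum_{k=1}^{m} \mu(E_{k,m})$ together with an exact computation of $\mu(E_{k,m})$. The arithmetic of the doubling map makes the latter immediate: writing $[0,1] = \mathbb{R}/\mathbb{Z}$, we have $T^k x - x \equiv (2^k - 1) x \pmod 1$, so that
\[
E_{k,m} = \bigl\{\, x \in X : \lVert (2^k - 1)x \rVert < r_m \,\bigr\},
\]
where $\lVert \cdot \rVert$ denotes the distance to the nearest integer. Since $2^k - 1 \geq 1$, multiplication by $2^k - 1$ preserves Lebesgue measure on $\mathbb{R}/\mathbb{Z}$, and the set $\{\, t : \lVert t \rVert < r_m \,\}$ has measure $\min(2 r_m, 1)$; hence $\mu(E_{k,m}) \leq 2 r_m$ for every $k$ and $m$. (This is the same fact about the functions $G_{k,m}$ used in the proof of Proposition~\ref{prop:Ear}, now applied crudely rather than through the nested-interval count.) Summing over $k = 1, \dots, m$ gives $\mu(C_m) \leq 2 m r_m$.

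Hypothesis \eqref{eq:cond2} is precisely the statement that $2 m r_m \to 0$, so $\mu(C_m) \to 0$ and the reduction above concludes the argument. I do not anticipate a genuine obstacle: in contrast to the measure-one direction, where the union bound over the $m$ sets $E_{k,m}$ would be hopelessly lossy and one must instead track a nested family of intervals, in the measure-zero direction that same union bound is already sharp. The only points needing a moment's care are that $r_m \to 0$ — which follows from $m r_m \to 0$ since $r_m \leq m r_m$ — and that $B(x, r_m)$ is to be read as a ball in the circle metric, the metric with respect to which the doubling map is continuous.
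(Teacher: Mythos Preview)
Your proposal is correct and is essentially the paper's own proof: both compute $\mu(E_{k,m}) = 2r_m$ (you via the measure-preserving property of $x \mapsto (2^k-1)x$ on the circle, the paper by counting the $2^k-1$ intervals of length $2r_m/(2^k-1)$), apply the union bound to get $\mu(C_m) \leq 2mr_m$, and conclude from $mr_m \to 0$. Your write-up is in fact slightly more explicit than the paper's in spelling out why $\mu(C_m)\to 0$ forces $\mu(\ear)=0$ via the inclusion $A_n \subseteq C_m$.
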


\begin{proof}
  Identifying $[0,1]$ with the circle, the set $E_{k,m}$ consists
  of $2^k - 1$ intervals (sectors) of length $2 r_m / (2^k -
  1)$. Hence the measure of $E_{k,m}$ is $2 r_m$.

  It follows immediately that $\mu (C_m) \leq 2 m r_m$. Hence,
  the condition
  \[
  \lim_{m \to \infty} m r_m = 0
  \]
  implies that $\mu (\ear) = 0$.
\end{proof}

\begin{remark}
  From the equation $\mu \bigl( \complement C_m \bigr) \geq 1 - 2r_m
  m$ we can also deduce the necessary condition for $\mu (\ear ) =1$ that
  \[
  \lim_{m \to \infty} \mu(B_m) m  =\lim_{m \to \infty} 2r_m m \geq 1
  \]
  In particular, $\ear$ cannot have
  full measure for $\mu(B_m) = \frac{c}{m}$ with any $c<1$.
\end{remark}

\end{document}